\newtheorem{Df}{Definition}
\newtheorem{theorem}[Df]{Theorem}
\newtheorem{prop}[Df]{Proposition}
\newtheorem{corollary}[Df]{Corollary}
\newtheorem{conj}[Df]{Conjecture}
\numberwithin{Df}{subsection}
\numberwithin{equation}{subsection}
\newcommand{\coev}{\ensuremath{\operatorname{coev}} }
\newcommand{\ev}{\ensuremath{\operatorname{ev}} }
\newcommand{\C}{\ensuremath{\mathbb{C}}}
\newcommand{\Z}{\ensuremath{\mathbb{Z}}}
\newcommand{\gl}{\ensuremath{\mathfrak{gl}}}
\newcommand{\ideal}{I}
\newcommand{\obj}{Ob}
\newcommand{\End}{\operatorname{End}}
\newcommand{\tr}{\operatorname{tr}}
\newcommand{\Id}{\operatorname{Id}}
\newcommand{\osp}{\mathfrak{osp}}
\newcommand{\mt}{\operatorname{\mathsf{t}}}
\newcommand{\md}{\operatorname{\mathsf{d}}}
\newcommand{\Fcat}{\mathcal{F}}
\newcommand{\Proj}{\ensuremath{\mathcal{P}roj}}
\newcommand{\fg}{\ensuremath{\mathfrak{g}}}
\newcommand{\fh}{\ensuremath{\mathfrak{h}}}
\newcommand{\fb}{\ensuremath{\mathfrak{b}}}
\newcommand{\fe}{\ensuremath{\mathfrak{e}}}
\newcommand{\fl}{\ensuremath{\mathfrak{l}}}
\newcommand{\fa}{\ensuremath{\mathfrak{a}}}
\newcommand{\0}{\ensuremath{\bar{0}}}
\newcommand{\1}{\ensuremath{\bar{1}}}
\newcommand{\sdim}{\ensuremath{\operatorname{sdim}}}
\newcommand{\defect}{\ensuremath{\operatorname{def}}}
\newcommand{\atyp}{\ensuremath{\operatorname{atyp}}}
\newcommand{\rank}{\operatorname{rank}}
\newcommand{\V}{\mathcal{V}}
\newcommand{\HH}{\operatorname{H}}
\newcommand{\res}{\operatorname{res}}
\newcommand{\Res}{\operatorname{Res}}
\newcommand{\Xvar}{\mathcal{X}}
\crefname{theorem}{Theorem}{Theorems}
\crefname{fact}{Fact}{Facts}
\crefname{note}{Note}{Notes}
\crefname{lemma}{Lemma}{Lemmas}
\crefname{alg}{Algorithm}{Algorithms}
\crefname{remark}{Remark}{Remarks}
\crefname{example}{Example}{Examples}
\crefname{prop}{Proposition}{Propositions}
\crefname{conj}{Conjecture}{Conjectures}
\crefname{cor}{Corollary}{Corollaries}
\crefname{defn}{Definition}{Definitions}
\crefname{equation}{}{}
\begin{document}
\title[{The generalized Kac-Wakimoto conjecture and support varieties for $\mathfrak{osp}(m|2n)$}]{The generalized Kac-Wakimoto conjecture and support varieties for the Lie superalgebra $\mathfrak{osp}(m|2n)$}

\author{Jonathan Kujawa}
\address{Mathematics Department\\
University of Oklahoma\\
Norman, OK 73019}
\thanks{Research of the author was partially supported by NSF grant
DMS-0734226 and NSA grant H98230-11-1-0127.}\
\email{kujawa@math.ou.edu}
\date{\today}

\begin{abstract}  Atypicality is a fundamental combinatorial invariant for simple supermodules of a basic Lie superalgebra.   Boe, Nakano, and the author gave a conjectural geometric interpretation of atypicality via support varieties. Inspired by low dimensional topology, Geer, Patureau-Mirand, and the author gave a generalization of the Kac-Wakimoto atypicality conjecture.  We prove both of these conjectures for the Lie superalgebra $\osp(m|2n)$.
 \end{abstract}

\maketitle
\setcounter{tocdepth}{1}

\section{Introduction}\label{S:Intro}   

\subsection{}  Let $\fg=\fg_{\0}\oplus \fg_{\1}$ be a basic classical Lie superalgebra over the complex numbers.  An important category of $\fg$-supermodules is the 
category $\Fcat$ of finite dimensional integrable $\fg$-supermodules.   Starting with the work of Kac \cite{Kac1, Kac2}, the category $\Fcat$ has been the object of investigation for more than 30 years by numerous researchers.  Of particular interest is the simple supermodules in $\Fcat$.   Most efforts have focused on obtaining character formulas (to mention only a few of the more prominent papers in the area, see \cite{brundan, CLW,GS, serganovaICM}).

Recently two new lines of investigation have developed.  The category $\Fcat$ shares a number of features with the modular representations of finite groups and, more generally, finite group schemes.  For example, $\Fcat$ is not semisimple, has enough projectives, and usually projectives and injectives in $\Fcat$ coincide.  Motived by the successful use of cohomology and support varieties in the finite group scheme setting, the authors of \cite{BKN1} began an investigation of $\Fcat$ using analogous tools.  They conjectured that support varieties provide a geometric interpretation of the combinatorial invariant known as atypicality.  Namely, given a supermodule $M$ in $\Fcat$, let $\V_{(\fg, \fg_{\0})}(M)$ denote the support variety associated to $M$ as in \cite{BKN1}.   Given a simple $\fg$-supermodule $L(\lambda)$ of highest weight $\lambda$, let $\atyp (\lambda)$ denote the atypicality of $\lambda$.  Precise definitions can be found in the body of this paper. The following ``atypicality conjecture'' is given in \cite[Conjecture 7.2.1]{BKN1}\footnote{More accurately, there the support variety of the detecting algebra is used but conjecturally the dimension of that variety coincides with the one used here.}.

\begin{conj}\label{C:atypconjintro}  Let $\fg$ be a basic classical Lie superalgebra and let $L(\lambda)$ be a simple supermodule in $\Fcat$.  Then,
\[
\dim \V_{(\fg , \fg_{\0})}(L(\lambda)) = \atyp (\lambda).
\]  
\end{conj}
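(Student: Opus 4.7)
The plan is to reduce \cref{C:atypconjintro} for $\fg = \osp(m|2n)$ to the already-understood computation of the dimension of the Duflo-Serganova associated variety of $L(\lambda)$. The bridge between the cohomological support variety and the combinatorial invariant $\atyp(\lambda)$ is a rank variety sitting inside the self-commuting cone $X = \{x \in \fg_\1 \mid [x,x] = 0\}$.

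The first step is to establish a rank-variety description of $\V_{(\fg, \fg_\0)}(M)$. Under the identification of $\operatorname{Spec} \HH^\bullet(\fg, \fg_\0; \C)$ with $X/\!/G_\0$ (where $G_\0$ is the connected reductive group with $\operatorname{Lie}(G_\0) = \fg_\0$), one shows that the support variety coincides, up to the $G_\0$-action, with the rank variety
\[
\Xvar_M = \{x \in X : M \text{ is not free over } \langle x\rangle\},
\]
where $\langle x\rangle \cong \C[x]/(x^2)$ denotes the exterior algebra on the odd element $x$. This identification is the heart of the BKN program carried out for $\gl(m|n)$, and the orthosymplectic case should follow by parallel methods, using the invariant theory of the $G_\0$-action on $X$ and the behavior of cohomology under restriction to the detecting subsuperalgebras attached to $\osp(m|2n)$.

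The second step is to match $\Xvar_M$ with the Duflo-Serganova associated variety. For $x \in X$, the module $M$ is free over $\langle x\rangle$ if and only if the DS cohomology $M_x := \ker(x)/\operatorname{im}(x)$ vanishes, so the two varieties agree set-theoretically. The theorem of Duflo-Serganova, established in the $\osp(m|2n)$ case, then yields $\dim \Xvar_{L(\lambda)} = \atyp(\lambda)$, and combining the two steps proves the conjecture.

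The main obstacle is the first step. Passing from the abstract cohomological definition of $\V_{(\fg, \fg_\0)}(M)$ to a concrete rank variety requires (i) a precise computation of $\HH^\bullet(\fg, \fg_\0; \C)$ as a graded ring together with an identification of its spectrum with $X/\!/G_\0$, and (ii) the verification that the $\HH^\bullet(\fg, \fg_\0; \C)$-action on $\Ext_\fg^\bullet(M,M)$ localizes correctly to recover the rank variety. In the $\osp$ setting this is more delicate than in type A because of the presence of two kinds of isotropic odd roots, the existence of both split and non-split detecting subalgebras, and the richer invariant theory for the orthogonal and symplectic groups. A reduction argument to the detecting subsuperalgebra, combined with a transfer step using translation functors or Kac-module filtrations within $\Fcat$, appears to be needed in order to deduce the statement for an arbitrary simple module $L(\lambda)$ from the detecting-algebra calculation.
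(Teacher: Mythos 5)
Your plan hinges on two inputs that are not actually available, so there is a genuine gap. First, the identification of $\operatorname{Spec} \HH^{\bullet}(\fg,\fg_{\0};\C)$ with $\Xvar/\!\!/G_{\0}$ and, more importantly, the rank-variety description of $\V_{(\fg,\fg_{\0})}(M)$ inside the self-commuting cone is not ``the heart of the BKN program carried out for $\gl(m|n)$.'' What is proved in the BKN papers is an Avrunin--Scott/rank-variety theorem for the \emph{detecting subalgebras} $\fe$ (and that is the only place where this paper invokes rank varieties, in the proof of Theorem~\ref{T:typeA}); for $\fg$ itself no such theorem is known, and the relation between $\V_{(\fg,\fg_{\0})}(M)$ and the Duflo--Serganova variety $\{x \in \Xvar : M_{x}\neq 0\}$ was conjectural even in type $A$. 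The BKN2 computation for $\gl(m|n)$ simples does not go through such a theorem; it goes through block equivalences and restriction maps in relative cohomology, which is exactly the route this paper takes for $\osp$. Moreover the stated identification is false on its face: $\HH^{\bullet}(\fg,\fg_{\0};\C)\cong S(\fg_{\1}^{*})^{G_{\0}}$, whose spectrum is $\fg_{\1}/\!\!/G_{\0}\cong \mathbb{A}^{\defect(\fg)}$, and nonzero invariants can vanish identically on $\Xvar$ (already for $\gl(1|1)$ the generator restricts to zero on $\Xvar$, so $\Xvar/\!\!/G_{\0}$ is a point while $\V_{(\fg,\fg_{\0})}(\C)\cong\mathbb{A}^{1}$). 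Second, your step 2 misquotes the Duflo--Serganova/Serganova results: it is not true that $\dim \Xvar_{L(\lambda)}=\atyp(\lambda)$ for the associated variety sitting in $\fg_{\1}$ (for the trivial module $\Xvar_{\C}=\Xvar$, whose dimension generally far exceeds the defect). What those results control is the maximal \emph{rank} of elements $x\in\Xvar$ with $L(\lambda)_{x}\neq 0$, and converting rank into the dimension of the cohomological support variety is precisely the nontrivial content you would still have to supply.

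For contrast, the paper's proof avoids any rank-variety statement for $\fg$: it first proves the generalized Kac--Wakimoto conjecture (using the Duflo--Serganova functor only to build nontrivial modified traces), deduces via Corollary~\ref{C:corollary} and Theorem~\ref{T:constantatypicality} that $\V_{(\fg,\fg_{\0})}(L(\lambda))$ depends only on $\atyp(\lambda)$, and then, for a well-chosen stable weight, uses the Gruson--Serganova equivalence of an atypicality-$k$ block with the principal block of $\fg_{k}$ to show $I_{\fg}(L(\lambda))=\operatorname{Ker}(\res_{\C})$ in the diagram \eqref{E:commute}; since $\HH^{\bullet}(\fg_{k},\fg_{k,\0};\C)$ is a polynomial ring in $k$ variables this gives $\V_{(\fg,\fg_{\0})}(L(\lambda))\cong\mathbb{A}^{k}$, with rank varieties used only at the level of $\widetilde{\fe}$. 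If you want to salvage your approach, you would need to prove the $\fg$-rank-variety (equivalently, support-equals-DS-variety up to the appropriate quotient) theorem for $\osp(m|2n)$, which is a substantially harder open problem than the conjecture you are trying to establish.
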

\noindent This conjecture was proven for $\gl(m|n)$ in \cite{BKN2}.

In a different direction, the authors of \cite{GKP} were motivated by questions in low dimensional topology to introduce modified trace and dimension functions for $\Fcat$.   Given a supermodule $M = M_{\0}\oplus M_{\1}$ in $\Fcat$ the appropriate analogue of dimension is superdimension:
\[
\sdim (M) := \dim M_{\0}- \dim M_{\1}.
\]  Plainly the superdimension can equal zero.  Let $\defect (\fg )$ denote the maximum possible value of $\atyp (\lambda)$ as $\lambda$ ranges over the highest weights of simple supermodules in $\Fcat$.  The following conjecture of Kac and Wakimoto \cite[Conjecture 3.1]{KW} makes precise when the superdimension of a simple supermodule vanishes.
\begin{conj}  Let $L(\lambda)$ be a simple $\fg$-supermodule in $\Fcat$, then 
\[
\atyp (\lambda) = \defect (\fg ) \text{ if and only if } \sdim (L(\lambda)) \neq 0.
\]
\end{conj}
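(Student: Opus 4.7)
The plan is to deduce the Kac--Wakimoto conjecture for $\fg = \osp(m|2n)$ from the atypicality theorem (\cref{C:atypconjintro}) using the Duflo--Serganova cohomology functor. Given $x \in \fg_{\1}$ with $[x,x]=0$, set $\fg_x = \ker(\mathrm{ad}\, x)/\mathrm{im}(\mathrm{ad}\, x)$. Then $\mathrm{DS}_x := \ker x /\mathrm{im}\, x$ is an exact symmetric monoidal functor $\Fcat \to \Fcat_{\fg_x}$ which preserves superdimension, and its non-vanishing locus on a simple $L(\lambda)$ is exactly Serganova's associated variety $X_{L(\lambda)}$. A comparison theorem (available for $\osp$) identifies $\dim X_{L(\lambda)}$ with $\dim \V_{(\fg,\fg_\0)}(L(\lambda))$, which by the atypicality theorem equals $\atyp(\lambda)$.

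With this in hand the ``only if'' direction is essentially automatic. Suppose $\atyp(\lambda) < \defect(\fg)$. Choose $x$ of maximal rank $\defect(\fg)$; since $\dim X_{L(\lambda)} < \defect(\fg)$, such an $x$ can be chosen outside $X_{L(\lambda)}$, so $\mathrm{DS}_x(L(\lambda)) = 0$, and hence $\sdim L(\lambda) = \sdim \mathrm{DS}_x(L(\lambda)) = 0$.

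The ``if'' direction is the substantive one. Assume $\atyp(\lambda) = \defect(\fg)$ and pick $x$ of maximal rank $\defect(\fg)$. Then $\fg_x$ has defect zero; concretely, for $\osp(m|2n)$ the quotient $\fg_x$ is, up to even factors, the reductive Lie algebra $\mathfrak{so}(m-2n)$ or $\mathfrak{sp}(2n-m)$ (according to the sign of $m - 2n$ and the parity of $m$). The goal is to show $\mathrm{DS}_x(L(\lambda)) \neq 0$ and that it has nonzero superdimension. Following the strategy used by Serganova and Heidersdorf--Weissauer in type $A$, I would compute $\mathrm{DS}_x(L(\lambda))$ via a weight-diagram combinatorial model for $\osp(m|2n)$ (following Gruson--Serganova and Musson--Serganova), identifying it up to a parity shift as a single simple $\fg_x$-supermodule. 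Since $\fg_x$ has defect zero, every simple $\fg_x$-supermodule in $\Fcat_{\fg_x}$ is typical, and the Weyl dimension formula then supplies the required non-vanishing of $\sdim \mathrm{DS}_x(L(\lambda))$.

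The main obstacle is the explicit computation of $\mathrm{DS}_x(L(\lambda))$ for a maximally atypical weight, and the attendant need to rule out cancellation of superdimensions across irreducible summands. In type $A$ this is controlled by the arc-diagram calculus of Brundan--Stroppel and Heidersdorf; for $\osp$ the parallel weight-diagram combinatorics must be developed (or invoked) uniformly across the $B(r,n)$ and $D(r,n)$ series, and additional care is needed for odd isotropic reflections, for the spin sector appearing when $m$ is odd, and for tracking the parity shift introduced by $\mathrm{DS}_x$. Once this DS computation is secured, both implications of the Kac--Wakimoto conjecture follow at once from the two arguments above.
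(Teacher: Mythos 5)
Your proposal does not close the statement: the ``if'' direction is left as a plan rather than a proof, and that direction is exactly the mathematical content of the conjecture. You reduce it to showing that for $x$ of maximal rank $\defect(\fg)$ and $\lambda$ maximally atypical, the fibre $L(\lambda)_x = \operatorname{Ker}(x)/\operatorname{Im}(x)$ is nonzero with nonzero superdimension, and you then defer this to a weight-diagram computation for $\osp$ that you acknowledge ``must be developed (or invoked).'' But this is precisely Serganova's hard result (the decomposition $L_x \cong T \otimes C_x(L)$, resp.\ $T\otimes C'_x(L)\oplus T^{\sigma}\otimes C''_x(L)$ for $\osp(2m|2n)$, together with $\sdim C_x(L) \neq 0$, i.e.\ \cite[Corollary 2.2, Theorem 2.3]{serganova4}), which is the input this paper cites and which amounts to the Kac--Wakimoto conjecture itself. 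Your expectation that $L(\lambda)_x$ is ``a single simple $\fg_x$-supermodule up to parity shift'' is also not what happens: one gets a typical simple tensored with a multiplicity \emph{superspace} (two such summands in the $D$-type case), and the possible cancellation of signs inside that superspace --- which you flag but do not rule out --- is the whole difficulty. Without that step your argument establishes nothing beyond the easy vanishing direction.

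There is also a circularity hazard in how you set up the ``only if'' direction. You invoke the atypicality theorem (Conjecture~\ref{C:atypconjintro}, i.e.\ $\dim \V_{(\fg,\fg_{\0})}(L(\lambda)) = \atyp(\lambda)$) together with a comparison between the associated variety $X_{L(\lambda)}$ and the support variety. But in this paper the support-variety computation for $\osp(m|2n)$ (Theorem~\ref{T:typeA}) is itself a consequence of the generalized Kac--Wakimoto conjecture via Theorem~\ref{T:constantatypicality}, whose proof rests on the same superdimension nonvanishing you are trying to prove; so you cannot use it here without importing the conclusion. The fix is cheap: the vanishing direction needs only the statement that $L_x = 0$ whenever $\rank(x) > \atyp(L)$ (\cite[Theorem 2.1]{serganova4}), together with the fact that the fibre functor preserves superdimension; no support varieties are needed. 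For comparison, the paper does not reprove the ordinary conjecture directly: it cites Serganova for it and obtains it anew only as the specialization $J = \C$ of the generalized conjecture proved in Section~\ref{S:GKW}, where the nonvanishing is supplied by the modified trace on $\Proj$ evaluated on typical modules combined with Serganova's Theorem 2.3 --- the step your proposal leaves open.
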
 The authors of \cite{GKP} introduce modified dimension functions for $\Fcat$ and prove that they are a natural replacement for the superdimension when the superdimension vanishes.  In particular, they provide the following generalization of the Kac-Wakimoto conjecture \cite[Conjecture 6.3.2]{GKP}.  Given a supermodule $M$, let $\ideal_{M}$ denote the full subcategory of all supermodules which appear as a direct summand of $M \otimes X$ for some supermodule $X$ in $\Fcat$.
\begin{conj}  Let $\fg$ be a basic classical Lie superalgebra and let $L(\lambda)$ be a simple $\fg$-supermodule.  Then $L(\lambda)$ admits an ambidextrous trace and we can define a modified dimension function $\md_{L(\lambda)}$ on $\ideal_{L(\lambda)}$.  If $L(\mu)$ is another simple supermodule with 
\[
\atyp (\mu) \leq \atyp(\lambda),
\] then $L(\mu)$ is an object of $\ideal_{L(\lambda)}$, and
\[
\atyp (\mu) = \atyp (\lambda ) \text{ if and only if } \md_{L(\lambda)} (L(\mu)) \neq 0.
\]
\end{conj}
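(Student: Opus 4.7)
The plan is to reduce the generalized Kac-Wakimoto conjecture for $\osp(m|2n)$ to Conjecture \ref{C:atypconjintro} (the atypicality/support variety equality) for the same algebra, and then feed this geometric input into the categorical machinery of \cite{GKP}. Thus the main task is to prove
\[
\dim \V_{(\fg,\fg_\0)}(L(\lambda)) = \atyp(\lambda), \qquad \fg = \osp(m|2n),
\]
from which the required tensor-ideal structure and modified dimension computations will follow in a largely formal way.

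For the support variety computation I would follow the template of \cite{BKN2}, replacing the $\gl$-detecting subalgebra with its $\osp$-analogue, whose rank equals $\defect(\fg)$ and whose invariant theory has been analyzed in earlier joint work with Boe and Nakano. Restriction to the detecting subalgebra together with the standard bound on restricted support varieties gives the upper bound $\dim \V_{(\fg,\fg_\0)}(L(\lambda)) \leq \atyp(\lambda)$. For the lower bound I would pass to a thin Kac module $K(\lambda)$ along a parabolic $\fp$ chosen so that the odd radical of $\fp$ contains exactly $\atyp(\lambda)$ mutually orthogonal isotropic roots, produce non-nilpotent cohomology classes on $K(\lambda)$ from the Koszul-type resolution associated to $\fp$, and then descend to $L(\lambda)$ using that every other composition factor of $K(\lambda)$ has strictly smaller atypicality, hence by induction on atypicality a strictly smaller support variety that cannot absorb the top-degree class.

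Once Conjecture \ref{C:atypconjintro} is in place, the generalized Kac-Wakimoto conjecture unpacks as follows. The category $\Fcat$ is pivotal (and in fact ribbon after an appropriate twist), and simple objects are self-dual up to a parity shift and an automorphism twist, so the existence and uniqueness of an ambidextrous trace on $\ideal_{L(\lambda)}$ follows from the general criteria of \cite[\S 4--6]{GKP}. For any $L(\mu)$ with $\atyp(\mu) \leq \atyp(\lambda)$, a translation functor argument realizes $L(\mu)$ as a direct summand of $L(\lambda) \otimes X$ for suitable $X \in \Fcat$, showing $L(\mu) \in \ideal_{L(\lambda)}$. The equivalence
\[
\atyp(\mu) = \atyp(\lambda) \iff \md_{L(\lambda)}(L(\mu)) \neq 0
\]
then splits into two halves: when $\atyp(\mu) < \atyp(\lambda)$, the simple $L(\mu)$ generates a proper sub-ideal (by the support variety equality applied to tensor products), on which the restriction of $\md_{L(\lambda)}$ must vanish by uniqueness of the ambidextrous trace; when the atypicalities agree one reduces to checking $\md_{L(\lambda)}(L(\lambda)) \neq 0$, which is built into the normalization.

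The principal obstacle is the lower bound in the support variety computation. For $\gl(m|n)$ the Kac-to-simple reduction is comparatively clean because composition multiplicities are controlled by Brundan-Kazhdan-Lusztig combinatorics with good positivity. For $\osp(m|2n)$ the analogous Gruson-Serganova combinatorics allows non-trivial composition factors of lower atypicality inside each Kac module, and one must verify that their contribution cannot accidentally cancel the top atypicality class. I expect the bulk of the technical work to go into this step, likely via a careful analysis of the action of the detecting subalgebra on these composition factors together with a Lyndon-Hochschild-Serre spectral sequence argument for the relative cohomology.
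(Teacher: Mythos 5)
There are two genuine gaps here, and they sit at the heart of your plan. First, your route to the lower bound $\dim \V_{(\fg,\fg_{\0})}(L(\lambda)) \geq \atyp(\lambda)$ via Kac modules cannot work as stated: for $\osp(m|2n)$ with $m \geq 3$ the superalgebra is of type II, so there is no parabolic with abelian odd radical giving Kac modules of the kind you describe, and, more fatally, your induction rests on the claim that the other composition factors of $K(\lambda)$ have strictly smaller atypicality. That is false for any highest weight induced module: atypicality is an invariant of the central character, hence constant on blocks, so every composition factor of $K(\lambda)$ has the \emph{same} atypicality as $L(\lambda)$ and the induction never gets off the ground. (Note also that the paper's logic runs in the opposite direction: the support variety computation of Section~\ref{S:supports} is deduced \emph{from} the generalized Kac--Wakimoto conjecture via Corollary~\ref{C:corollary} and Theorem~\ref{T:constantatypicality}, not used as an input to it.)

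Second, even granting the support variety equality, the generalized conjecture does not ``unpack formally.'' The existence of a nonzero ambidextrous trace on each simple $L(\lambda)$ --- without which $\md_{L(\lambda)}$ is not even defined --- does not follow from self-duality or from general criteria in \cite{GKP}; it is precisely the nontrivial content, and the paper obtains it by transporting the known trace on $\Proj$ (from \cite{GKP2}) through the Duflo--Serganova fibre functor $M \mapsto M_x$, using Serganova's results that $L_x$ is projective, that $L_x \cong T \otimes C_x(L)$ (with a twisted summand in type D) with $\sdim C_x(L) \neq 0$, and that $L_x = 0$ when $\atyp(L) < \operatorname{rank}(x)$ \cite{serganova4}. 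Likewise, $\md_{L(\lambda)}(L(\mu)) \neq 0$ for $\mu \neq \lambda$ of equal atypicality is not ``built into the normalization''; it is an actual computation (again via the fibre functor, or via Serganova's Lemma~6.3 on the ideals $\ideal_{S_k}$ combined with \cite[Theorem 4.2.1]{GKP}), and your appeal to ``uniqueness of the ambidextrous trace'' for the vanishing half likewise presupposes a strict inclusion of ideals that you have not established independently of the trace machinery. Your translation functor claim that any $L(\mu)$ with $\atyp(\mu) \leq \atyp(\lambda)$ is a summand of $L(\lambda) \otimes X$ is also asserted, not proved; in the paper this containment comes from Serganova's Lemma~6.3 together with the nonvanishing of the modified dimension, so it cannot be quoted as a formal preliminary to that nonvanishing.
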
  \noindent If $L(\lambda)$ is the trivial supermodule, then $\ideal_{L(\lambda)}=\Fcat$, $\atyp (\lambda) =\defect (\fg )$, and $\md_{L(\lambda)}=\sdim$.  In this way the generalized Kac-Wakimoto conjecture specializes to the ordinary Kac-Wakimoto conjecture.  Recently Serganova proved the ordinary Kac-Wakimoto conjecture for $\gl (m|n)$ and $\osp (m|2n)$ and the generalized Kac-Wakimoto conjecture for $\gl (m|n)$ \cite{serganova4}.

\subsection{}  In the present paper we consider the case when $\fg$ equals $\gl (m|n)$ or $\osp (m|2n)$.  That is, the Lie superalgebras of type ABCD in the Kac classification \cite{Kac1}.  Taken together these are the infinite families of basic classical Lie superalgebras in the Kac classification.  The results we prove are new for $\osp (m|2n)$ but, as we described above, are known for $\gl (m|n)$ by the work of Serganova \cite{serganova4} and Boe, Kujawa, and Nakano \cite{BKN2}.   However the proofs work equally well for $\gl (m|n)$ so we include them.  Perhaps the most interesting case which still remains is the type Q Lie superalgebras.  Although not basic, they have a notion of atypicality and the geometric and topological viewpoints apply.

In Section~\ref{S:GKW} we prove the generalized Kac-Wakimoto conjecture for $\Fcat$.  It is worth remarking that this has the following purely representation theoretic corollary.  Let $L(\lambda)$ and $L(\mu)$ be simple $\fg$-supermodules with $\atyp (\lambda) = \atyp (\mu)$.  Then there is a supermodule $X$ in $\Fcat$ such that $L(\lambda)$ is a direct summand of $L(\mu) \otimes X$. This in turn implies the support variety of all simple supermodules of the same atypicality coincide (see Theorem~\ref{T:constantatypicality}).  Similarly the complexity of the simple supermodules of the same atypicality coincide.  The recent calculation of complexity for the simple supermodules for $\gl (m|n)$ in \cite{BKN4} depends crucially on this result.

In Section~\ref{S:supports} we compute the support varieties of the simple $\fg$-supermodules in $\Fcat$. We show that if $L(\lambda)$ is a simple $\fg$-supermodule, then 
\[
\V_{(\fg , \fg_{\0})}(L(\lambda)) \cong \mathbb{A}^{\atyp (\lambda)}.
\] In particular this verifies Conjecture~\ref{C:atypconjintro}.  Note that the support variety is canonically defined for any object of $\Fcat$.  Thus the above result justifies the definition of the atypicality of a general supermodule via 
\[
\atyp (M) = \dim \V_{(\fg , \fg_{\0})}(M). 
\]

\subsection{Acknowledgements}  The author would like recognize his collaborations with Boe and Nakano, and Geer and Patureau-Mirand.  The results of this paper are a direct outgrowth of that work.  The author is thankful for many stimulating conversations.  The author would also like to acknowledge Serganova for helpful discussions.  In particular, several key ingredients used here were developed by Serganova, Duflo-Serganova, and Gruson-Serganova.  Finally, the author would like to recognize the stimulating environment provided by the Southeast Lie Theory conference series.  

\section{Preliminaries}\label{S:Prelims}  

\subsection{}  All vector spaces will be over the complex numbers, $\C $, and finite dimensional unless otherwise stated.  In most cases the vector spaces will have a $\Z_{2}$-grading, $V=V_{\0}\oplus V_{\1}$, and we will write $\bar{v} \in \Z_{2}$ for the degree of a homogeneous element $v \in V$.  We call an element $v \in V$ \emph{even} (resp.\ \emph{odd}) if $\bar{v}=\0$ (resp.\ $\bar{v}=\1$).

Let $\fg=\fg_{\0} \oplus \fg_{\1}$ denote one of the Lie superalgebras $\gl (m|n)$, $\osp (2m|2n)$, and $\osp (2m+1|2n)$ as defined in \cite{Kac1}.  In each case $\fg_{\0}$ is reductive as a Lie algebra and is \emph{classical} in the sense of \cite{BKN1}.  Furthermore, in each case we may define a bilinear form $(\; , \; ): \fg  \otimes \fg  \to \C $ by $(x,y) = \operatorname{str}(xy)$, where $\operatorname{str}$ is the supertrace.  This defines a nondegenerate, supersymmetric, invariant, even bilinear form and so by definition $\fg$ is \emph{basic}. 

Fix a choice of Cartan subalgebra $\fh \subset \fg_{\0}$ as in \cite{GS}.   The bilinear form on $\fg$ induces a bilinear form on the dual of the Cartan subalgebra, $\fh^{*}$, which we again denote by $(\; , \; )$.  In particular, we may choose a basis for $\fh^*$, $\varepsilon_{1}, \dotsc , \varepsilon_{m}, \delta_{1}, \dotsc , \delta_{n}$, on which 
\[
(\varepsilon_{i}, \varepsilon_{j}) = \delta_{i,j}, \hspace{.25in} (\varepsilon_{i}, \delta_{j}) = 0, \hspace{.25in} (\delta_{i}, \delta_{j}) = - \delta_{i,j}.
\]  With respect to our choice of Cartan subalgebra we have a decomposition of $\fg$ into root spaces. Each root space is one dimensional and spanned by a homogenous vector.  Consequently, we may define the parity of a root to be the parity of the corresponding root space.  We write $\Phi$ (resp.\ $\Phi_{\0}$ and $\Phi_{\1}$) for the set of roots (resp.\ set of even and odd roots).

We can explicitly describe the root systems as follows.   If $\fg = \gl (m|n)$, then the roots are 
\begin{align*}
\Phi_{\0} &= \left\{\varepsilon_{i}-\varepsilon_{j} \mid i \neq j \right\} \cup  \left\{\delta_{i}-\delta_{j} \mid   i \neq j  \right\}, \\
\Phi_{\1} &= \left\{\pm (\varepsilon_{i}-\delta_{j})  \right\}.
\end{align*} If $\fg = \osp (2m|2n)$, then the roots are
\begin{align*}
\Phi_{\0} &= \left\{\pm \varepsilon_{i}\pm \varepsilon_{j} \mid  i \neq j  \right\} \cup  \left\{\pm \delta_{i}\pm \delta_{j} \mid i \neq j  \right\} \cup \left\{2\delta_{i}  \right\}, \\
\Phi_{\1} &= \left\{\pm \varepsilon_{i}\pm \delta_{j}  \right\}.
\end{align*} If $\fg = \osp (2m+1|2n)$, then the roots are
\begin{align*}
\Phi_{\0} &= \left\{\pm \varepsilon_{i}\pm \varepsilon_{j} \mid   i \neq j  \right\} \cup  \left\{\pm \delta_{i}\pm \delta_{j} \mid  i \neq j \right\} \cup \left\{\pm \varepsilon_{i} \right\} \cup \left\{2\delta_{i}  \right\}, \\
\Phi_{\1} &= \left\{\pm \varepsilon_{i}\pm \delta_{j}  \right\}.
\end{align*}  In each case the subscripts on the epsilons are from among $1, \dotsc , m$ and the subscripts on the deltas are from among $1, \dotsc , n$.

We call a finite dimensional $\fg$-supermodule \emph{integrable} if all its weights lie in the $\Z$-span of $\varepsilon_{1}, \dotsc , \varepsilon_{m}, \delta_{1}, \dotsc , \delta_{n}$.   Let $\Fcat=\Fcat (\fg )$ denote the category of all integrable finite dimensional $\fg$-supermodules and all (not necessarily grading preserving) $\fg$-supermodule homomorphisms.  We should remark that our $\Fcat$ is a full subcategory of the category $\Fcat (\fg , \fg_{\0})$ considered in \cite{BKN1}.  However, the projective cover in $\Fcat (\fg , \fg_{\0})$ of any object of $\Fcat$ lies in $\Fcat$.  This implies that projective resolutions, cohomology, support varieties, etc.\ in the two categories coincide.   By $\fg$-supermodule we will always mean an object in $\Fcat$ unless otherwise stated.   Let $\Fcat_{\0}$ denote the category of all finite dimensional integrable $\fg$-supermodules and all grading preserving $\fg$-supermodule homomorphisms.  In \cite{serganova4} Serganova considers a full subcategory of $\Fcat_{\0}$.  However, the parity change functor allows us to apply her results without loss to $\Fcat_{\0}$ and $\Fcat$.

We fix the same choice of Borel subalgebra $\fb$ containing $\fh$ as in \cite{GS} and define $\rho$ to be the half sum of the positive even roots minus the half sum of the positive odd roots.  See just before \cite[Corollary 3]{GS} for a list of the simple roots and the $\rho$ corresponding to this choice of $\fb$.  Then the simple objects of $\Fcat$ are parameterized by highest weight with respect to our choice of $\fh$ and $\fb$.  We write $L(\lambda)$ for the simple supermodule of highest weight $\lambda \in \fh^{*}$.  By definition, we call $\lambda \in \fh^{*}$ a dominant integral weight if it is the highest weight of some simple supermodule in $\Fcat$.  For an explicit description of the dominant integral highest weights with respect to these choices, see \cite[Corollary 3]{GS}.

The maximal number of pairwise orthogonal isotropic roots with respect to the bilinear form on $\fh^{*}$ is the \emph{defect} of $\fg$.  We write $\defect(\fg )$ for the defect of $\fg$. In our case 
\[
\defect (\gl (m|n) )=\defect (\osp (2m|2n) )=\defect (\gl (2m+1|2n) ) = \min (m,n).
\]  
 The \emph{atypicality} of the simple supermodule $L(\lambda)$ is defined to be the maximal number of pairwise orthogonal isotropic roots which are also orthogonal to $\lambda + \rho$.  We write $\atyp (\lambda)$ for this number.  We write 
\begin{equation}\label{E:atypset}
A(\lambda) = \left\{\alpha_{1}, \dotsc , \alpha_{\atyp (\lambda)} \right\},
\end{equation}
for a fixed choice of such roots.  Although the set $A(\lambda)$ is not unique, it is known that the size of the set is well defined and, furthermore, does not depend on our choice of Cartan or Borel subalgebras.  Consequently, it makes sense to write $\atyp(L)$ for a simple supermodule $L$.  

\subsection{}    Given a $\fg$-supermodule $M = M_{\0} \oplus M_{\1}$ the \emph{superdimension} of $M$ is given by 
\[
\sdim (M) = \dim (M_{\0}) - \dim (M_{\1}).
\]

 Note that by definition we have $\atyp (L) \leq \defect (\fg )$ for any simple $\fg$-supermodule $L$.  The following conjecture of Kac and Wakimoto \cite[Conjecture 3.1]{KW} makes precise when we in fact have equality.
\begin{conj}\label{C:KWConj}  Let $L$ be a simple $\fg$-supermodule in $\Fcat$, then 
\[
\atyp (L) = \defect (\fg ) \text{ if and only if } \sdim (L) \neq 0.
\]
\end{conj}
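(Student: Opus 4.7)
The plan is to use the Duflo--Serganova cohomology functor to reduce the vanishing question for $\sdim L(\lambda)$ to a computation inside a smaller basic classical Lie superalgebra. For every odd self-commuting element $x \in \fg_{\1}$ (i.e.\ $[x,x]=0$, which is equivalent to $x^{2}=0$ acting on any $\fg$-supermodule), the assignment $M \mapsto M_{x} := \ker(x)/\operatorname{im}(x)$ defines a symmetric monoidal functor from $\Fcat(\fg)$ to $\Fcat(\fg_{x})$, where $\fg_{x} := \ker(\operatorname{ad} x)/\operatorname{im}(\operatorname{ad} x)$ is again basic classical. A direct Euler-characteristic count, using $x^{2}=0$, yields the crucial identity
\[
\sdim M = \sdim M_{x}, \qquad M \in \Fcat(\fg).
\]

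For $\fg = \osp(m|2n)$, the work of Duflo--Serganova classifies the $G_{\0}$-orbits on the variety of self-commuting elements by a single \emph{rank} $k \in \{0,1,\dotsc,\defect(\fg)\}$; a rank-$k$ representative is $x = e_{\alpha_{1}}+\dotsb+e_{\alpha_{k}}$ for pairwise orthogonal isotropic roots $\alpha_{1},\dotsc,\alpha_{k}$, and for such $x$ one computes $\fg_{x} \cong \osp(m-2k\,|\,2(n-k))$, of defect $\defect(\fg)-k$. Moreover, Gruson--Serganova identify the associated variety
\[
\Xvar_{L(\lambda)} := \left\{ x \in \fg_{\1} \,:\, [x,x]=0,\ (L(\lambda))_{x} \neq 0 \right\}
\]
with the Zariski closure of the rank-$\atyp(\lambda)$ orbit. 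The necessity direction is now immediate: if $\atyp(\lambda) < \defect(\fg)$, pick $x$ of rank $\atyp(\lambda)+1 \leq \defect(\fg)$; then $x \notin \Xvar_{L(\lambda)}$, so $(L(\lambda))_{x} = 0$ and therefore $\sdim L(\lambda) = \sdim (L(\lambda))_{x} = 0$.

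For the sufficiency direction, assume $\atyp(\lambda) = \defect(\fg)$ and pick $x$ of maximal rank. Then $\fg_{x}$ has defect $0$, so every simple object of $\Fcat(\fg_{x})$ is typical; Kac's character formula (specializing to the Weyl dimension formula on the reductive summand) then shows that the superdimension of any such simple supermodule is manifestly nonzero. It remains to show that $(L(\lambda))_{x}$ itself has nonzero superdimension, and for this one invokes the Gruson--Serganova description of the DS cohomology of a simple module: for maximally atypical $\lambda$ and maximal-rank $x$, $(L(\lambda))_{x}$ is, up to a parity shift, a single typical simple $\fg_{x}$-supermodule. Consequently $\sdim L(\lambda) = \sdim (L(\lambda))_{x} \neq 0$.

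The main obstacle is the sufficiency direction: knowing that $(L(\lambda))_{x}$ is nonzero does not, a priori, prevent it from being a direct sum of simple $\fg_{x}$-supermodules whose nonzero superdimensions cancel. Pinning down the exact $\fg_{x}$-structure of $(L(\lambda))_{x}$ in the maximally atypical case is the technical core, and this is precisely where the type BCD combinatorics of $\osp(m|2n)$---and the full strength of the Gruson--Serganova analysis, in parallel with Serganova's treatment of $\gl(m|n)$---are essential. Once in hand, the ordinary Kac--Wakimoto conjecture is exactly the $\unit = L(0)$ specialization of the generalized version that Section~\ref{S:GKW} establishes, so the same technical input drives both statements.
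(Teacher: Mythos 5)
Your underlying mechanism is the right one and is the same engine the paper runs on: push $L(\lambda)$ through the Duflo--Serganova functor $M\mapsto M_{x}$, use $\sdim M=\sdim M_{x}$, kill the superdimension when $\atyp(\lambda)<\defect(\fg)$ by choosing $x$ of rank $\atyp(\lambda)+1$ (the vanishing $L(\lambda)_{x}=0$ here is Serganova's Theorem~2.1 in \cite{serganova4}, via Duflo--Serganova --- not Gruson--Serganova as you attribute it), and in the maximally atypical case land in a defect-zero algebra $\fg_{x}$ whose typical simples have nonzero superdimension. Be aware, though, that the paper does not reprove Conjecture~\ref{C:KWConj} by this direct route: it records that Serganova already proved it for $\gl(m|n)$ and $\osp(m|2n)$, and its own derivation is the specialization $J=\unit$ of the generalized conjecture established in Section~\ref{S:GKW} (where $\ideal_{\unit}=\Fcat_{\0}$, $\atyp(\unit)=\defect(\fg)$, and $\md_{\unit}=\sdim$); the nontriviality argument in Theorem~\ref{T:EverySimpleIsAmbi} consumes exactly the two inputs your sketch needs, namely Corollary~2.2 and Theorem~2.3 of \cite{serganova4}. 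So your direct argument is essentially Serganova's cited proof, and it is more elementary than the modified-trace machinery, which is only required for the generalized statement.

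The genuine weak point is your sufficiency step. The assertion that for maximal-rank $x$ and maximally atypical $\lambda$ the module $L(\lambda)_{x}$ is ``up to a parity shift a single typical simple $\fg_{x}$-supermodule'' is not what holds and not what Serganova proves: the correct structure is $L(\lambda)_{x}\cong T\otimes C_{x}(L)$ with $T$ typical simple and $C_{x}(L)$ a multiplicity superspace with trivial $\fg_{x}$-action, and for $\osp(2m|2n)$ the two-term form $T\otimes C'_{x}(L)\oplus T^{\sigma}\otimes C''_{x}(L)$. The entire content of the nonvanishing is Serganova's Theorem~2.3, $\sdim C_{x}(L)\neq 0$ (resp.\ $\sdim\left(C'_{x}(L)\oplus C''_{x}(L)\right)\neq 0$); Kac's typical dimension formula only gives $\sdim T\neq 0$ and cannot exclude the cancellation inside the multiplicity space that you yourself flag as the obstacle. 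You identify this as ``the technical core'' but neither prove it nor cite it precisely, so as written the forward direction is open in your sketch. With that citation in place (plus the observation $\sdim T^{\sigma}=\sdim T$, needed to combine the two summands in type D --- the same point the paper handles by comparing the traces $\mt^{p}$ and $\mt^{\sigma}$), your argument closes; alternatively, your closing move of quoting the generalized conjecture from Section~\ref{S:GKW} at $J=\unit$ is precisely the paper's own route, but then the preceding direct sketch is not an independent proof, only a restatement of the inputs that Section~\ref{S:GKW} already uses.
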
  While investigating generalized trace and dimension functions on nonsemisimple tensor categories, Geer, Patureau-Mirand and the author gave a generalized Kac-Wakimoto conjecture \cite[Conjecture 6.3.2]{GKP} (see Conjecture~\ref{C:GenKW}). Serganova recently proved the ordinary Kac-Wakimoto conjecture for $\gl (m|n)$ and $\osp (m|2n)$ and the generalized Kac-Wakimoto conjecture for $\gl (m|n)$ \cite{serganova4}.   Our first goal is to prove the generalized Kac-Wakimoto conjecture for  $\osp (m|2n)$.

\subsection{}\label{SS:gentrace}  In \cite{GKP} generalized trace and dimension functions on ribbon categories were introduced.  We only provide the definitions and results we need and refer the reader to \emph{loc.\  cit.} for additional details.  In order to be mathematically correct we work in $\Fcat_{\0}$ for the remainder of this section and Section~3 so as to have a ribbon category.  However simple arguments using the parity change functor show the results also hold in $\Fcat$.  We leave this to the interested reader.

For any object $V$ in $\Fcat_{\0}$, let $V^{*}$ denote the dual supermodule.  For any $V,W$ in $\Fcat_{\0}$, let $V \otimes W$ denote the tensor product supermodule (where the tensor product is over $\C$).  For any object $V$ in $\Fcat_{\0}$, let $\ev_{V}: V^{*} \otimes V \to \C$ be the evaluation morphism given by $f \otimes x \mapsto f(x)$ and let $\coev_{V}: \C  \to V \otimes V^{*}$ be the coevaluation morphism given by $1 \mapsto \sum_{i=1}^{n} v_{i} \otimes f_{i}$, where $v_{1}, \dotsc , v_{n}$ is a homogeneous basis for $V$ and where $f_{i} \in V^{*}$ is defined by $f_{i}(v_{j}) = \delta_{i,j}$.  Define the graded ``flip'' map $c_{V,W}: V \otimes W \to W \otimes V$ by $v \otimes w \mapsto (-1)^{\bar{v}\cdot \bar{w}}w \otimes v$.  Finally, for all $V$ in $\Fcat_{\0}$ we set the ``twist map'' $\theta_{V}: V \to V$ to be the identity.  The above data makes $\Fcat_{\0}$ into a \emph{ribbon category}.

For short we write $\ev'_{V} = \ev_{V} \circ c_{V,V^{*}}$ and $\coev'_{V} = c_{V,V^{*}} \circ \coev_{V}$.
Fix a pair of objects $V$ and $W$ in $\Fcat_{\0}$ and an endomorphism $f$ of $V\otimes W$.  To such objects and morphisms we use the ribbon category structure to define the following morphisms:
\begin{equation*}
\tr_{L}(f)=(\ev_{V}\otimes \Id_{W})\circ(\Id_{V^{*}}\otimes
f)\circ(\coev'_{V}\otimes \Id_{W}) \in \End_{\Fcat_{\0}}(W),
\end{equation*}
and
\begin{equation*}
\tr_{R}(f)=(\Id_{V}\otimes \ev'_{W}) \circ (f \otimes \Id_{W^{*}})
\circ(\Id_{V}\otimes \coev_{W}) \in \End_{\Fcat_{\0}}(V).
\end{equation*}

Given an object $J$ in $\Fcat_{\0}$, the \emph{ideal} $\ideal_{J}$ is the full subcategory of all objects which appear as direct summands of $J \otimes X$ for some object $X$ in $\Fcat_{\0}$.  More precisely, $M$ is an object of $\ideal_{J}$ if and only if there is an object $X$ in $\Fcat_{\0}$ and morphisms $\alpha: M \to J\otimes X$ and $\beta: J \otimes X \to M$ with $\beta \circ \alpha = \Id_{M}$.   For example, if $P$ is a projective supermodule in $\Fcat_{\0}$, then $\ideal_{P}$ is precisely the full subcategory of projective objects.  For short we denote this particular ideal by $\Proj$.

If $\ideal_{J}$ is an ideal in $\Fcat_{\0}$ then a \emph{trace on $\ideal_{J}$} is a family of linear functions
$$\mt = \{\mt_V:\End_{\Fcat_{\0}}(V)\rightarrow \C \}$$
where $V$ runs over all objects of $\ideal_{J}$ and such that following two conditions hold.
\begin{enumerate}
\item  If $U\in \ideal_{J}$ and $W$ is an object of $\Fcat_{\0}$, then for any $f\in \End_{\Fcat_{\0}}(U\otimes W)$ we have
\begin{equation}\label{E:VW}
\mt_{U\otimes W}\left(f \right)=\mt_U \left( \tr_R(f)\right).
\end{equation}
\item  If $U,V\in \ideal$ then for any morphisms $f:V\rightarrow U $ and $g:U\rightarrow V$  in $\Fcat_{\0}$ we have 
\begin{equation}\label{E:fggf}
\mt_V(g\circ f)=\mt_U(f \circ g).
\end{equation} 
\end{enumerate}  

 For $V$ an object of $\Fcat_{\0}$, we say a linear function
  $\mt:\End_{\Fcat_{\0}}(V)\rightarrow K$ is an \emph{ambidextrous trace on $V$} if for
  all $f\in \End_{\Fcat_{\0}}(V\otimes V)$ we have
$$\mt(\tr_L(f))=\mt(\tr_R(f)).$$   For short we call a supermodule \emph{ambidextrous} if it is simple and if it admits a nonzero ambidextrous trace.  The following theorem summarizes several results from \cite[Section 3.3]{GKP} as they apply here.

\begin{theorem}\label{T:ambi}  Let $L$ be a simple $\fg$-supermodule. If $\ideal_{L}$ admits a trace then the map $\mt_{L}$ is an ambidextrous trace on $L$. Conversely, an ambidextrous trace on $L$ extends uniquely to a trace on $\ideal_{L}$.  Furthermore, the trace on $\ideal_{L}$ and the ambidextrous trace on $L$ are unique up to multiplication by an element of $\C$.
\end{theorem}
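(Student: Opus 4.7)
The plan is to specialize the arguments of \cite[Section 3.3]{GKP}, which work in an arbitrary ribbon category and therefore apply to $\Fcat_{\0}$. For the first statement, suppose $\mt$ is a trace on $\ideal_L$ and fix $f \in \End_{\Fcat_{\0}}(L \otimes L)$. Set $f' = c_{L,L} \circ f \circ c_{L,L}^{-1}$; applying the cyclicity axiom \eqref{E:fggf} to the pair $c_{L,L}$ and $f \circ c_{L,L}^{-1}$ (both viewed as morphisms $L \otimes L \to L \otimes L$, and noting that $L \otimes L \in \ideal_L$) gives $\mt_{L \otimes L}(f') = \mt_{L \otimes L}(f)$, and the partial trace axiom \eqref{E:VW} applied with $U = W = L$ then yields
\[
\mt_L(\tr_R(f')) = \mt_{L \otimes L}(f') = \mt_{L \otimes L}(f) = \mt_L(\tr_R(f)).
\]
A standard diagrammatic identity in a symmetric ribbon category with trivial twist gives $\tr_R(f') = \tr_L(f)$, so $\mt_L(\tr_L(f)) = \mt_L(\tr_R(f))$; that is, $\mt_L$ is an ambidextrous trace on $L$.

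For the converse, given an ambidextrous trace $\mt$ on $L$ and any $M \in \ideal_L$, choose $X \in \Fcat_{\0}$ and morphisms $\alpha : M \to L \otimes X$, $\beta : L \otimes X \to M$ with $\beta \circ \alpha = \Id_M$; for $f \in \End_{\Fcat_{\0}}(M)$ I would define
\[
\wt{\mt}_M(f) := \mt\bigl( \tr_R(\alpha \circ f \circ \beta) \bigr).
\]
The crux is independence of the choice of presentation $(X, \alpha, \beta)$. Given a second presentation $(X', \alpha', \beta')$, the difference of the two candidate values can be reorganized, via cyclicity and the partial trace identities of the ribbon structure, into the difference of a left and a right partial trace of a single endomorphism of $L \otimes L$; the ambidextrous property of $\mt$ then forces the two values to agree. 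Taking $M = L$ with the trivial presentation $X = \unit$, $\alpha = \beta = \Id_L$ shows $\wt{\mt}_L = \mt$, so $\wt{\mt}$ is indeed an extension. The axioms \eqref{E:VW} and \eqref{E:fggf} for $\wt{\mt}$ follow from routine diagram chases involving evaluation and coevaluation morphisms.

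Finally, uniqueness up to a scalar is a consequence of Schur's lemma: since $L$ is simple in $\Fcat_{\0}$ we have $\End_{\Fcat_{\0}}(L) = \C \cdot \Id_L$, so the space of ambidextrous traces on $L$ is at most one-dimensional, and the construction above shows that any trace on $\ideal_L$ is determined by its restriction to $L$. The main obstacle in this outline is the well-definedness of $\wt{\mt}_M$; this is the one place where the ambidextrous hypothesis is essential, and executing it cleanly requires a careful graphical calculus comparing two different ways of contracting an endomorphism of $L \otimes L$.
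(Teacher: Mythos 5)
Your outline follows the same route as the source on which the paper itself relies: the paper does not prove Theorem~\ref{T:ambi} but quotes it from \cite[Section 3.3]{GKP}, and your argument is a reconstruction of that proof. The first direction is correct and essentially complete as you give it: conjugating by $c_{L,L}$, applying \eqref{E:fggf} to morphisms of $L\otimes L$ (which lies in $\ideal_{L}$), then \eqref{E:VW}, together with the identity $\tr_R\bigl(c_{L,L}\circ f\circ c_{L,L}^{-1}\bigr)=\tr_L(f)$, which does hold here because the braiding of $\Fcat_{\0}$ is symmetric and the twist is trivial. The uniqueness statement is also fine: $\End_{\Fcat_{\0}}(L)=\C\cdot\Id_{L}$, and any trace on $\ideal_{L}$ satisfies $\mt_{M}(f)=\mt_{L\otimes X}(\alpha\circ f\circ\beta)=\mt_{L}\bigl(\tr_R(\alpha\circ f\circ\beta)\bigr)$ by \eqref{E:fggf} and \eqref{E:VW}, hence is determined by $\mt_{L}$, i.e.\ by a single scalar.

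The gap is in the converse, and it is exactly the step you flag: well-definedness of $\wt{\mt}_{M}(f)=\mt\bigl(\tr_R(\alpha\circ f\circ\beta)\bigr)$. As written, your justification risks circularity: you appeal to ``cyclicity,'' but cyclicity relating the two objects $L\otimes X$ and $L\otimes X'$ is precisely what must be derived from ambidexterity of $\mt$ on $L$ alone; the trace axioms are not available for the family you are in the process of constructing. The actual argument of \cite[Section 3.3]{GKP} first reduces independence of the presentation to the statement that $g\mapsto\mt(\tr_R(g))$ is cyclic for pairs $u\colon L\otimes X\to L\otimes X'$, $v\colon L\otimes X'\to L\otimes X$ (insert $\Id_{M}=\beta'\circ\alpha'$ to write the two candidate values as $\mt(\tr_R(uv))$ and $\mt(\tr_R(vu))$), and then proves that statement by dualizing the $X$-factors: $u$ and $v$ are packaged, via evaluation and coevaluation on $X$ and $X'$, into a single endomorphism $h$ of $L\otimes L$ whose left and right partial traces, after applying $\mt$, recover the two sides; ambidexterity then finishes. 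Producing this $h$ and verifying those two identities is the mathematical heart of the theorem, and your proposal describes its existence without supplying it. Similarly, the verification of \eqref{E:VW} and \eqref{E:fggf} for the extended family is not entirely ``routine''; it requires the same kind of manipulation once $U$ is only a direct summand of $L\otimes X$ rather than $L\otimes X$ itself. So the outline is faithful to the cited proof, but the substance of the converse is deferred rather than established.
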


Given a trace on $\ideal_{J}$, $\{\mt_{V} \}_{V \in \ideal_{J}}$, we define the modified dimension function on objects of $\ideal_{J}$,  
\[
\md_{J}: \obj(\ideal_{J}) \to \C,
\]
by taking the modified trace of the identity morphism:
\begin{equation*}
\md_{J}\left(V \right) =\mt_{V}(\Id_{V}).
\end{equation*}
We can now state the generalized Kac-Wakimoto conjecture \cite[Conjecture 6.3.2]{GKP}.

\begin{conj}\label{C:GenKW}  Let $\fg$ be a basic classical Lie superalgebra and let $J$ be a simple $\fg$-supermodule.  Then $J$ is ambidextrous and if $L$ is another simple supermodule with 
\[
\atyp (L) \leq \atyp(J),
\] then $L$ is an object of $\ideal_{J}$ and
\[
\atyp (L) = \atyp (J ) \text{ if and only if } \md_{J} (L) \neq 0.
\]
\end{conj}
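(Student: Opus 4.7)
The plan is to reduce the generalized Kac-Wakimoto conjecture to Serganova's already-established ordinary Kac-Wakimoto conjecture for $\osp(m|2n)$, using the Duflo--Serganova cohomology functor as the principal inductive tool. For a nonzero odd element $x \in \fg_{\1}$ with $[x,x] = 0$, set $\fg_x = \ker(\operatorname{ad} x)/\operatorname{im}(\operatorname{ad} x)$; this is again a basic Lie superalgebra of the same type but smaller defect, and $DS_x : \Fcat(\fg) \to \Fcat(\fg_x)$ is a symmetric monoidal $\C$-linear functor compatible with duals. Results of Duflo--Serganova and Gruson--Serganova show that when $x$ is taken in the root space of an isotropic root $\alpha \in A(\lambda)$, the simple factors of $DS_x(L(\lambda))$ all have atypicality $\atyp(\lambda) - 1$ and can be identified explicitly, so one has good control of the functor on simples.

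I would argue by downward induction on $\defect(\fg) - \atyp(J)$. The base case $\atyp(J) = \defect(\fg)$ is handled directly: Serganova's proof of the ordinary Kac-Wakimoto conjecture for $\osp(m|2n)$ gives $\sdim(J) \neq 0$, so the standard categorical trace restricts to a nonzero ambidextrous trace on $J$. By Theorem~\ref{T:ambi} one obtains $\ideal_J = \Fcat$ and $\md_J$ is a nonzero scalar multiple of $\sdim$, whence the generalized statement for $J$ collapses to the ordinary Kac-Wakimoto statement applied to $L$.

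For the inductive step, given $J$ with $\atyp(J) < \defect(\fg)$, choose $x$ so that $\defect(\fg_x) = \defect(\fg) - 1$ and fix a simple summand $J'$ of $DS_x(J)$ with $\atyp(J') = \atyp(J)$. By the inductive hypothesis $J'$ is ambidextrous in $\Fcat(\fg_x)$; since $DS_x$ is monoidal and intertwines $\tr_L$ and $\tr_R$, the ambidextrous trace on $J'$ pulls back to an ambidextrous trace on $J$, and nonvanishing on $\Id_{J'}$ forces nonvanishing on $\Id_J$. To prove the membership statement, assume $\atyp(L) \leq \atyp(J)$, pick a compatible simple summand $L'$ of $DS_x(L)$, apply induction in $\Fcat(\fg_x)$ to obtain $X'$ with $L'$ a direct summand of $J' \otimes X'$, and then lift $X'$ to an object $X$ of $\Fcat$ (using for example that projective covers in $\Fcat_{\0}$ surject onto their $DS_x$-images up to summands). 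Chasing direct summands back through the monoidal functor $DS_x$ then gives $L \in \ideal_J$. Finally, the nonvanishing criterion $\md_J(L) \neq 0 \iff \atyp(L) = \atyp(J)$ will follow from naturality of the partial trace under $DS_x$: up to a nonzero scalar $\md_J(L)$ equals $\md_{J'}(L')$, and iterating the DS reduction until the ambient defect equals $\atyp(J)$ reduces the question to the ordinary Kac-Wakimoto statement $\sdim \neq 0$.

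The main obstacle will be obtaining sufficiently precise control over $DS_x(L(\lambda))$ in the orthosymplectic case: to both pull back ambidextrous traces and descend ideal membership, one needs the Jordan--H\"older content of $DS_x(L(\lambda))$ together with the existence of compatible simple summands at each atypicality stratum. For $\gl(m|n)$ this is the heart of Serganova's argument; for $\osp(m|2n)$ the corresponding structural results are due to Gruson--Serganova, and the bulk of the work will be in marrying those results with the abstract trace-functor formalism of Section~\ref{SS:gentrace}.
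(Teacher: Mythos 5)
Your overall mechanism (reduce via the Duflo--Serganova fibre functor to a case where a trace is already known) is the same one the paper uses, but your rank-one inductive implementation has two genuine gaps. First, the nonvanishing step. When you pull a trace back through $DS_x$, the value on the identity is not $\md_{J'}(\Id_{J'})$ alone: writing $DS_x(J)\cong\bigoplus_i L'_i\otimes C_i$ with multiplicity superspaces $C_i$, you get $\mt_J(\Id_J)=\sum_i \md_{J'}(L'_i)\,\sdim(C_i)$, a signed sum over \emph{all} simple summands, and nonvanishing of one term does not preclude cancellation. This is not a hypothetical worry: the paper's proof of Theorem~\ref{T:EverySimpleIsAmbi} avoids the induction entirely by taking $x$ of rank equal to $\atyp(J)$, so that $J_x$ is projective and the trace $\mt^{p}$ on $\Proj$ (known to exist by \cite[Theorem 4.8.2]{GKP2}) can be used; even there, for $\osp(2m|2n)$ one has $J_x\cong T\otimes C'\oplus T^{\sigma}\otimes C''$ and a separate argument via uniqueness of traces on $\Proj$ (Theorem~\ref{T:ambi}) is needed to show $\mt^{p}_{T}(\Id_T)=\mt^{p}_{T^{\sigma}}(\Id_{T^{\sigma}})$ so the two contributions cannot cancel, with nonvanishing then supplied by Serganova's theorem that $\sdim(C'\oplus C'')\neq 0$. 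Your rank-one steps would require comparable control of the full decomposition of $DS_x(J)$ and of the relative values of $\md_{J'}$ on its constituents at every intermediate defect, which you do not supply.

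Second, the membership statement $L\in\ideal_J$ cannot be obtained by ``chasing direct summands back through $DS_x$.'' The fibre functor does not reflect direct summands: knowing that $L'$ is a summand of $J'\otimes X'\subseteq DS_x(J\otimes X)$ gives no way to lift the splitting idempotent along $DS_x$, which is neither full nor faithful on the relevant endomorphism spaces, so one cannot conclude that $L$ is a summand of $J\otimes X$. This is precisely the hard representation-theoretic input: the paper imports it from Serganova's Lemma 6.3 (a single simple $S_k$ whose ideal contains every simple of atypicality $k$) together with Gruson--Serganova translation functors to get the chain $\ideal_0\subseteq\ideal_1\subseteq\dotsb\subseteq\ideal_{\defect(\fg)}$, and then upgrades containments to equalities of ideals using nonvanishing of the modified dimension and \cite[Theorem 4.2.1]{GKP} (while vanishing for lower atypicality follows from $L_x=0$). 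Without a substitute for these inputs, your induction does not close, and the ``main obstacle'' you flag at the end is in fact the content of the proof rather than a technical loose end.
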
 \noindent  This conjecture was proven for $\gl (m|n)$ by Serganova in \cite{serganova4}.

\section{Generalized Kac-Wakimoto Conjecture}\label{S:GKW}

\subsection{}\label{SS:fibrefunctor}   We first prove that every simple $\fg$-supermodule in $\Fcat_{\0}$ is ambidextrous.  To do so we use the fibre functor introduced by Duflo and Serganova \cite{DS} and further developed by Serganova \cite{serganova4}. We first summarize the results of theirs which we require.  

Let $G_{\0}$ denote the connected reductive algebraic group with Lie algebra $\fg_{\0}$.  Let 
\[
\Xvar = \left\{x \in \fg_{\1} \mid [x,x]=0 \right\} .
\]
Given an element $x \in \Xvar$, the $G_{\0}$-orbit of $x$ contains elements of the form $x_{1}+\dotsb + x_{k}$, where $x_{i}$ lies in the root space $\fg_{\alpha_{i}}$ and $\alpha_{1}, \dotsc , \alpha_{k}$ are pairwise orthogonal, isotropic roots. It is straightforward to see that the number $k$ depends only on the orbit and so it makes sense to define the \emph{rank} of $x$ to be $k$.  We write $\rank (x)$ for this number.  By definition the rank of $x$ is among $0, 1, \dotsc , \defect (\fg )$.  Using the root space decomposition of $\fg$ it is not difficult to see that every possible value is achieved.

Given an $x \in \Xvar$, let $\operatorname{Cent}_{\fg}(x)$ denote the centralizer of $x$ in $\fg$ and set 
\[
\fg_{x} = \operatorname{Cent}_{\fg}(x)/[x,\fg ].
\]  Note that $\fg_{x}$ is a Lie superalgebra and if $y \in \Xvar$ with $\rank(x) = \rank (y)$, then $\fg_{x} \cong \fg_{y}$.  Furthermore, if $\rank(\fg ) = k$, then we have: 
\begin{itemize}
\item if $\fg$ is $\gl  (m|n)$, then $\fg_{x}$ is isomorphic to $\gl (m-k|n-k)$;
\item  if $\fg$ is  $\osp (2m+1|2n)$, then $\fg_{x}$ is isomorphic to $\osp (2(m-k)+1|2(n-k))$;
\item  if $\fg$ is   $\osp (2m|2n)$, then $\fg_{x}$ is isomorphic to $\osp (2(m-k)|2(n-k)$;
\end{itemize}

If $x \in \Xvar$, then in the enveloping superalgebra of $\fg$ we have $0=[x,x]=2x^{2}$.  Hence for any supermodule $M$ in $\Fcat_{\0}$ the linear map $M \to M$ given by action of $x$ squares to zero.  That is, it makes sense to define 
\[
M_{x} = \operatorname{Ker}(x)/\operatorname{Im}(x).
\]  Note that $M_{x}$ is naturally a $\fg_{x}$-supermodule and the assignment $M \mapsto M_{x}$ defines a functor from  $\Fcat_{\0} $ to $\Fcat_{\0}(\fg_{x})$ which is called the \emph{fibre functor}.  We write $f \mapsto f_{x}$ for the functor's action on a morphism $f$.  Note that the fibre functor is a functor of ribbon categories.

\subsection{}  Recall that $\Proj$ is the ideal of all projective objects.  By \cite[Theorem 4.8.2]{GKP2} the ideal $\Proj$ is known to admit a nontrivial trace.  We write 
\[
\mt^{p} = \left\{\mt^{p}_{V} \mid V \in \Proj \right\}
\]
for this trace.  We call a simple supermodule \emph{typical} if it has atypicality zero.  By \cite[Theorem 1]{Kac2} if a simple supermodule is typical then it is projective.  Furthermore if $T$ is a typical simple supermodule, since $\Proj$ admits a nontrivial trace and $\ideal_{T}=\Proj$, it follows from Theorem~\ref{T:ambi} that $\mt^{p}_{T}(\Id_{T}) \neq 0$.

\begin{theorem}\label{T:EverySimpleIsAmbi} Let $\fg$ denote $\gl (m|n)$, $\osp (2m+1|2n)$, or $\osp (2m|2n)$.  Then every simple $\fg$-supermodule in $\Fcat_{\0}$ is ambidextrous.
\end{theorem}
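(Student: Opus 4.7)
The plan is to invoke Theorem~\ref{T:ambi}: since $L = L(\lambda)$ is simple, $\End_{\Fcat_{\0}}(L) = \C\cdot\Id_L$, and it suffices to exhibit a single nonzero ambidextrous trace on $L$. Our strategy is to transport the trace $\mt^p$ on $\Proj(\fg_x)$ back along the Duflo--Serganova fibre functor $(-)_x$ for a carefully chosen $x \in \Xvar$, thereby reducing the problem to the typical case, where nontriviality of $\mt^p$ is already known.

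Set $k = \atyp(\lambda)$ and let $A(\lambda) = \{\alpha_1,\dotsc,\alpha_k\}$ be the fixed set of pairwise orthogonal isotropic roots orthogonal to $\lambda + \rho$. Choose a nonzero $x_i$ in the root space $\fg_{\alpha_i}$ and set $x = x_1 + \dotsb + x_k$; then $\rank(x) = k$ and $\fg_x$ is a Lie superalgebra of the same type as $\fg$ with defect $\defect(\fg) - k$. The essential technical input---extracted from Serganova's analysis in \cite{serganova4} (see also \cite{DS})---is that for this choice of $x$ the fibre $L_x$ is a nonzero $\fg_x$-supermodule all of whose composition factors are \emph{typical} for $\fg_x$; intuitively, the atypicality of $L$ is absorbed by having chosen $x$ along the distinguished roots $A(\lambda)$. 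Since typical simples are projective by \cite[Theorem~1]{Kac2}, it follows that $L_x \in \Proj(\fg_x)$.

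Granted this, define $\mt_L \colon \End_{\Fcat_{\0}}(L) \to \C$ by $\mt_L(f) = \mt^p_{L_x}(f_x)$. For any $f \in \End_{\Fcat_{\0}}(L \otimes L)$, the fibre functor is monoidal with $(L \otimes L)_x \cong L_x \otimes L_x$ and is a ribbon functor, so it commutes with $\ev$, $\coev$, and the braiding, and hence with the partial traces: $(\tr_L(f))_x = \tr_L(f_x)$ and $(\tr_R(f))_x = \tr_R(f_x)$. Since $L_x \otimes L_x$ again lies in $\Proj(\fg_x)$, Theorem~\ref{T:ambi} applied inside $\Fcat_{\0}(\fg_x)$ tells us $\mt^p$ is ambidextrous on $L_x$, and we conclude $\mt_L(\tr_L(f)) = \mt_L(\tr_R(f))$. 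Evaluating on the identity yields $\mt_L(\Id_L) = \mt^p_{L_x}(\Id_{L_x})$, the sum of the nonzero modified dimensions of the typical constituents of $L_x$; the remaining point---that these contributions do not cancel---will follow from tracking the parities of the summands in the explicit structure of $L_x$ provided by the typicality argument.

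The hard part is therefore the structural statement about the fibre $L_x$: both its nonvanishing and, crucially, the typicality of its composition factors for $\fg_x$. Once that input is secured, the transport-of-trace construction and the ribbon compatibility of $(-)_x$ deliver ambidexterity essentially formally, uniformly across $\gl(m|n)$, $\osp(2m+1|2n)$, and $\osp(2m|2n)$.
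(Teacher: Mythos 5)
Your construction of the candidate trace is exactly the paper's: choose $x\in\Xvar$ of rank $k=\atyp(\lambda)$, use Serganova's result that $L_{x}$ is a (nonzero) direct sum of typical, hence projective, $\fg_{x}$-supermodules, and set $\mt_{L}(f)=\mt^{p}_{L_{x}}(f_{x})$; the ribbon-functor property of the Duflo--Serganova fibre functor then gives the ambidexterity identities formally. The genuine gap is in the last step, where you assert that the nonvanishing $\mt_{L}(\Id_{L})\neq 0$ ``will follow from tracking the parities of the summands.'' That is precisely the hard point, and it is not a parity-bookkeeping exercise. Even in the cases $\gl(m|n)$ and $\osp(2m+1|2n)$, where $L_{x}\cong T\otimes C_{x}(L)$ for a single typical $T$ and a multiplicity superspace $C_{x}(L)$ with trivial $\fg_{x}$-action, one gets $\mt_{L}(\Id_{L})=\mt^{p}_{T}(\Id_{T})\,\sdim C_{x}(L)$, and the factor $\sdim C_{x}(L)$ could a priori vanish by cancellation between the even and odd parts of $C_{x}(L)$; its nonvanishing is Serganova's Theorem~2.3 in \cite{serganova4} (essentially a Kac--Wakimoto-type superdimension statement), a substantive input your sketch never invokes.

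The situation is worse for $\osp(2m|2n)$, which your ``uniform'' argument glosses over entirely: there $L_{x}\cong T\otimes C'_{x}(L)\oplus T^{\sigma}\otimes C''_{x}(L)$ involves \emph{two} distinct typical simples, so $\mt_{L}(\Id_{L})=\mt^{p}_{T}(\Id_{T})\sdim C'_{x}(L)+\mt^{p}_{T^{\sigma}}(\Id_{T^{\sigma}})\sdim C''_{x}(L)$, and knowing each modified dimension is nonzero and that $\sdim C'_{x}(L)+\sdim C''_{x}(L)\neq 0$ does not rule out cancellation between the two terms. The paper needs an extra argument here: twisting by the involution $\sigma$ gives a second trace $\mt^{\sigma}$ on $\Proj$, and the uniqueness statement of Theorem~\ref{T:ambi} (applied to a typical simple $U$ with $U^{\sigma}=U$, which generates $\Proj$) forces $\mt^{\sigma}=\mt^{p}$, hence $\mt^{p}_{T}(\Id_{T})=\mt^{p}_{T^{\sigma}}(\Id_{T^{\sigma}})$, after which the common factor can be pulled out and Serganova's superdimension theorem applied. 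Without supplying both of these ingredients --- the superdimension nonvanishing and, for $\osp(2m|2n)$, the equality of the modified dimensions of $T$ and $T^{\sigma}$ --- your proof establishes only that $\mt_{L}$ is an ambidextrous trace, possibly the zero one, which does not show $L$ is ambidextrous in the sense required.
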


\begin{proof} Let $L$ be a simple supermodule in $\Fcat_{\0}$ of atypicality $k$.   Fix $x \in \Xvar$ with $\rank(x)=k$.  For any $\fg$-supermodule $M$ let
\[
\varphi_{x}:\End_{\fg}\left(M \right) \to \End_{\fg_{x}}\left(M_{x} \right) 
\] denote the algebra map induced by the fibre functor via $\varphi_{x}(f) = f_{x}$.   By \cite[Corollary 2.2]{serganova4}  $L_{x}$ is a direct sum of typical supermodules and so is projective.  More generally, if $M$ is an object of $\ideal_{L}$, then it is a direct summand of $L \otimes Y$ for some supermodule $Y$.  Applying the fibre functor we see that $M_{x}$ is a direct summand of $L_{x} \otimes Y_{x}$ and so is projective.  Consequently it makes sense for any $M$ in $\ideal_{L}$ to define a map, $\mt_{M}$, by the composition 
\begin{equation}\label{E:mtracedef}
\mt_{M}:= \mt^{p}_{M_{x}}\circ \varphi_{x} : \End_{\Fcat_{\0}}\left(M \right) \to \C .
\end{equation}
Since the fibre functor is a functor of ribbon categories it is straightforward to verify that $\mt = \left\{\mt_{M} \mid V \in \ideal_{L} \right\}$ is a (possibly trivial) trace on the ideal $\ideal_{L}$.

We now prove  $\mt$ is nontrivial.  First we assume $\fg$ is either $\gl (m|n)$ or $\osp(2m+1|2n)$.  By \cite[Corollary 2.2]{serganova4}, since $L$ is a simple $\fg$-supermodule of atypicality $k$, we have 
\begin{equation}\label{E:Lxdef}
L_{x} \cong T \otimes C_{x}(L)
\end{equation}
as $g_{x}$-supermodules, where $T$ is a typical simple $\fg_{x}$-supermodule and $C_{x}(L)$ is a superspace with trivial $\fg_{x}$-action.  Using \eqref{E:Lxdef} we compute $\mt_{L}\left( \Id_{L}\right)$:
\[
\mt_{L}(\Id_{L}) = \mt^{p}_{L_{x}}(\Id_{L,x}) = \mt^{p}_{L_{x}}(\Id_{L_{x}})= \mt^{p}_{T}\left(\tr_{R}(\Id_{L_{x}}) \right) = \mt^{p}_{T}\left(\Id_{T} \right)\sdim \left(C_{x}(L) \right).
\] The first equality is by the definition of $\mt$, the second by the definition of the fibre functor, the third is by  \eqref{E:fggf} and \cref{E:Lxdef}, and the last is by direct calculation.   Furthermore, since $T$ is a typical simple supermodule we have $\mt^{p}_{T}(\Id_{T}) \neq 0$ and by \cite[Theorem 2.3]{serganova4} we have $\sdim \left(C_{x}(L) \right) \neq 0$.  Therefore $\mt_{L}$ is nontrivial and so $\mt$ is a nontrivial trace on $\ideal_{L}$ and $L$ is ambidextrous. 

The case when $\fg = \osp(2m|2n)$ is argued similarly.  The only difference is that by \cite[Corollary 2.2]{serganova4} we instead have
\begin{equation*}
L_{x} \cong T \otimes C'_{x}(L) \oplus T^{\sigma} \otimes C''_{x}(L)
\end{equation*}
as $\fg_{x}$-supermodules.   Here $T$ is a typical simple $\fg_{x}$-supermodule, $T^{\sigma}$ is the typical simple supermodule obtained by twisting $T$ by the involution $\sigma: \fg_{x}  \to \fg_{x}$ given just before \cite[Corollary 2.2]{serganova4}, and $C'_{x}(L)$ and $C''_{x}(L)$ are superspaces with trivial $\fg_{x}$-action.  
Using linearity and calculating as before, we have 
\begin{equation}\label{E:mtcalc}
\mt_{L}\left(\Id_{L} \right) = \mt^{p}_{T}\left(\Id_{T} \right) \sdim \left( C'_{x}(L) \right) +  \mt^{p}_{T^{\sigma}}\left(\Id_{T^{\sigma}}\right) \sdim \left( C''_{x}(L) \right).
\end{equation}

We now claim that $\mt^{p}_{T}\left(\Id_{T} \right) = \mt^{p}_{T^{\sigma}}\left(\Id_{T^{\sigma}}\right)$.  Define an endofunctor of $\Fcat_{\0}$ by twisting by $\sigma$:  on objects the functor is given by $M \mapsto M^{\sigma}$ and is the identity on morphisms.   Twisting by $\sigma$ is a functor of ribbon categories and takes $\Proj$ to itself.  Thus we may define a new family of maps $\mt^{\sigma} = \left\{\mt_{V}^{\sigma} \mid V \in \Proj  \right\}$ on $\Proj$ by precomposing by this functor: 
\[
\mt^{\sigma}_{M}(f) = \mt^{p}_{M^{\sigma}}(f).
\]  The fact that twisting by $\sigma$ is a functor of ribbon categories implies that $\mt^{\sigma}$ is a trace on $\Proj$.   Using this new trace we can rewrite our claim as $\mt^{p}_{T}\left(\Id_{T} \right) = \mt^{\sigma}_{T}\left(\Id_{T}\right)$.

Thus to prove our claim it suffices to prove that the traces $\mt^{p}$ and $\mt^{\sigma}$ coincide.  That is, that $\mt^{p}_{V}= \mt^{\sigma}_{V}$ for all $V$ in $\Proj$. By the explicit description of $\sigma$ given in \cite[Section 2]{serganova4} there exist typical simple supermodules $U$ for which $U^{\sigma}=U$  and for such a supermodule it is immediate that $\mt^{p}_{U} = \mt^{\sigma}_{U}$.   

However, $\ideal_{U}= \Proj$ and so by Theorem~\ref{T:ambi} a trace on $\Proj$ is completely determined by $\mt^{p}_{U}$.  That is, since  $\mt^{p}_{U}= \mt^{\sigma}_{U}$, we in fact have that $\mt^{p}$ and $\mt^{\sigma}$ coincide on all of $\Proj$.  In particular,  $\mt^{p}_{T}\left(\Id_{T} \right) = \mt^{\sigma}_{T}\left(\Id_{T}\right)$ and so $\mt^{p}_{T}\left(\Id_{T} \right) = \mt^{p}_{T^{\sigma}}\left(\Id_{T^{\sigma}}\right)$.  

Returning to \eqref{E:mtcalc}, we obtain
\begin{equation}\label{E:mtcalc2}
\mt_{L}\left(\Id_{L} \right) = \mt^{p}_{T}\left(\Id_{T} \right) \left[  \sdim \left( C'_{x}(L) \right) +  \sdim \left( C''_{x}(L) \right)\right].
\end{equation}  By \cite[Theorem 2.3]{serganova4} we have 
\begin{equation*}
\sdim \left( C'_{x}(L) \oplus C''_{x}(L) \right) =\sdim \left( C'_{x}(L)\right) + \sdim \left(C''_{x}(L) \right) \neq 0.
\end{equation*} Furthermore $T$ is a simple object in $\Proj$ and so $\mt^{p}_{T}(\Id_{T}) \neq 0$.  Combining these observations with \eqref{E:mtcalc2} we see that $\mt_{L}$ is nontrivial.  That is, $\mt$ defines a nontrivial trace on $\ideal_{L}$ and $L$ is ambidextrous. 
\end{proof}

We remark that the ambidextrous trace on $L$ given in the proof may depend on the choice of $x$.  However, since $L$ is simple any two traces differ only by a scalar multiple.  We also remark that our reduction to the typical case is inspired by Serganova's analogous approach for $\gl (m|n)$ given in \cite{serganova4}.  However, Serganova used a different argument to prove the nontriviality of the trace on $\ideal_{L}$.  Her approach uses the explicit description of the trace on typical supermodules given via supercharacters in \cite{GP0}. 

\subsection{}\label{SS:ideals} 
When $k=0$, we set $S_{0}$ to be a typical simple supermodule.  Then $\ideal_{S_{0}}=\Proj$ and it contains every typical simple supermodule and has a nontrivial trace by \cite[Theorem 4.8.2]{GKP2}.  By \cite[Lemma 6.3]{serganova4}, for each $0 < k \leq \defect (\fg )$, there exists a simple $\fg$-supermodule, $S_{k}$, of atypicality $k$ such that every simple supermodule of atypicality $k$ lies in $\ideal_{S_{k}}$.  By the previous theorem $S_{k}$ is ambidextrous and, in particular, we may fix an $x \in \Xvar$ of rank $k$ which defines a nontrivial trace on $\ideal_{S_{k}}$.  In either case we denote the trace on $\ideal_{S_{k}}$ by $\mt = \left\{\mt_{V} \mid V \in \ideal_{S_{k}} \right\}$ and the corresponding dimension function by $\md_{S_{k}}$.

\begin{prop}\label{T:onedirection}  Let  $0 \leq k \leq \defect (\fg )$ and let $S_{k}$ be the simple $\fg$-supermodule given above.  Let $L$ be a simple supermodule of atypicality $k$.  Then $\md_{S_{k}}(L) \neq 0$ and $\ideal_{L}=\ideal_{S_{k}}$.
\end{prop}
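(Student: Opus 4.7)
The plan splits the proposition into three assertions: (i) $L \in \ideal_{S_k}$, (ii) $\md_{S_k}(L)\neq 0$, and (iii) $\ideal_{S_k} \subseteq \ideal_L$. Statement (i) is immediate from the defining property of $S_k$ recalled in Section~\ref{SS:ideals}: every simple supermodule of atypicality $k$ belongs to $\ideal_{S_k}$. This already yields the easy half $\ideal_L \subseteq \ideal_{S_k}$, since a direct summand of $L \otimes X$ is a direct summand of $S_k \otimes (Y \otimes X)$ whenever $L$ is a summand of $S_k \otimes Y$.

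For (ii) I would redo the computation carried out for $S_k$ in the proof of Theorem~\ref{T:EverySimpleIsAmbi}, now with $L$ in its place. The trace on $\ideal_{S_k}$ was defined by $\mt_M = \mt^{p}_{M_x} \circ \varphi_x$ for a fixed $x \in \Xvar$ of rank $k$. Since $L$ is itself simple of atypicality $k$, \cite[Corollary 2.2]{serganova4} supplies the same shape of decomposition of $L_x$ into typical $\fg_x$-supermodules: a single typical factor $T \otimes C_x(L)$ for $\gl(m|n)$ and $\osp(2m+1|2n)$, or a $\sigma$-twisted pair $T \otimes C'_x(L) \oplus T^\sigma \otimes C''_x(L)$ for $\osp(2m|2n)$. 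Running the identical chain of equalities then reduces $\md_{S_k}(L)$ to the expression $\mt^{p}_T(\Id_T)\cdot \sdim(C_x(L))$, or its two-term analogue using the already-established equality $\mt^{p}_T(\Id_T)=\mt^{p}_{T^\sigma}(\Id_{T^\sigma})$, which is nonzero by typicality of $T$ and \cite[Theorem 2.3]{serganova4}.

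The main obstacle is (iii), i.e.\ showing $S_k \in \ideal_L$. My strategy is to use (ii) to put $L$ on the same footing as $S_k$ and then transport the construction. Since $\md_{S_k}(L) \neq 0$, the restriction of $\mt$ to $\End_{\Fcat_{\0}}(L) = \C\cdot \Id_L$ is a nontrivial ambidextrous trace on $L$, so by Theorem~\ref{T:ambi} it extends uniquely to a trace on $\ideal_L$ that agrees with $\mt|_{\ideal_L}$ up to a nonzero scalar. Thus $L$ enjoys exactly the same features that made the fibre-functor construction succeed for $S_k$ in Section~\ref{SS:ideals}: simplicity, atypicality $k$, and an explicit nontrivial trace on its ideal produced by $\mt^p\circ\varphi_x$. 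The plan is then to revisit the argument of \cite[Lemma 6.3]{serganova4} and verify that it uses only these three structural features, so that it transfers verbatim with $L$ replacing $S_k$ and yields $S_k \in \ideal_L$. Ensuring that Serganova's selection of $S_k$ is not secretly pinned down by some additional property (for instance a distinguished highest weight) is the delicate point to check; once it is, (iii) follows and the two ideals coincide.
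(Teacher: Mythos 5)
Parts (i) and (ii) of your plan are fine and essentially coincide with the paper: $L\in\ideal_{S_{k}}$ is the defining property of $S_{k}$, and your computation of $\md_{S_{k}}(L)$ is just the computation from the proof of Theorem~\ref{T:EverySimpleIsAmbi} run again with $L$ in place of $S_{k}$ (the paper shortcuts this by observing that $\mt_{L}=\mt^{p}_{L_{x}}\circ\varphi_{x}=\mt'_{L}$, where $\mt'$ is the nontrivial trace on $\ideal_{L}$ already produced by that theorem for the same $x$, so $\md_{S_{k}}(L)=\mt'_{L}(\Id_{L})\neq 0$; note also the paper treats $k=0$ separately via Theorem~\ref{T:ambi} rather than through the fibre functor).

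The genuine gap is in (iii). Your plan to re-run the proof of \cite[Lemma 6.3]{serganova4} with $L$ in place of $S_{k}$ is not a proof: Serganova's lemma asserts the existence of \emph{some} simple $S_{k}$ of atypicality $k$ whose ideal contains all simples of that atypicality, and nothing in your argument rules out that her construction exploits a special choice of highest weight; indeed the assertion that \emph{every} simple of atypicality $k$ has this property is exactly the statement you are trying to prove, so the "delicate point" you flag is not a loose end but the whole content of (iii). The paper avoids this entirely by invoking \cite[Theorem 4.2.1]{GKP}, a general ribbon-category fact about modified dimension functions: if $L$ lies in $\ideal_{S_{k}}$ and $\md_{S_{k}}(L)\neq 0$, then $\ideal_{L}=\ideal_{S_{k}}$ (and correspondingly, vanishing forces strict inclusion, which is how Proposition~\ref{T:onedirection2} goes). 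With that theorem in hand, your steps (i) and (ii) already finish the proof, and no transfer of Serganova's argument is needed. So the missing ingredient is precisely this nonvanishing-implies-equality-of-ideals result; without it (or an equivalent partial-trace argument showing that $\md_{S_{k}}(L)\neq 0$ lets one split $S_{k}$ off of $L\otimes L^{*}\otimes S_{k}$), your step (iii) does not go through.
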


\begin{proof}  Since $L$ lies in $\ideal_{S_{k}}$, we have $\ideal_{L} \subseteq \ideal_{S_{k}}$.    By \cite[Theorem 4.2.1]{GKP} the nonvanishing of $\md_{S_{k}}(L)$ implies that the ideals are equal.  Thus it suffices to compute $\md_{S_{k}}(L)$. If $k=0$, then this is a consquence of Theorem~\ref{T:ambi} and  the fact that $\ideal_{L}=\Proj$.  
If $k>0$, then by the previous theorem $L$ is ambidextrous and using the element $x$ fixed above we also have a nontrivial trace on $\ideal_{L}$.  We denote this trace by $\mt' = \left\{\mt'_{V} \mid V \in \ideal_{L} \right\}$.  Recall in particular that $\mt'_{L}(\Id_{L}) \neq 0$.  Using the definition of $\mt$ and $\mt '$ we have 
\begin{equation*}
\md_{S_{k}}(L)  = \mt_{L}(\Id_{L}) = \mt_{L_{x}}^{p}(\Id_{L,x}) = \mt'_{L}\left(\Id_{L} \right)  \neq 0.
\end{equation*}
\end{proof}

It is worth making explicit the following representation theoretic interpretation of the previous result.
\begin{corollary}\label{C:corollary}  Let $\fg$ denote $\gl (m|n)$ or $\osp (m|2n)$.  Let $L_{1}$ and $L_{2}$ be two simple supermodules in $\Fcat_{\0}$ with the same atypicality.  Then there are supermodules $X_{1}$ and $X_{2}$ in $\Fcat_{\0}$ such that $L_{2}$ is a direct summand of $L_{1}\otimes X_{1}$ and  $L_{1}$ is a direct summand of $L_{2}\otimes X_{2}$. \end{corollary}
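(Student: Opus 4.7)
The plan is to derive this as an almost immediate consequence of \cref{T:onedirection}. Let $k$ denote the common atypicality $\atyp(L_{1}) = \atyp(L_{2})$, and let $S_{k}$ be the simple $\fg$-supermodule fixed in Section~\ref{SS:ideals}. Applying \cref{T:onedirection} to $L_{1}$ gives $\ideal_{L_{1}} = \ideal_{S_{k}}$, and applying it to $L_{2}$ gives $\ideal_{L_{2}} = \ideal_{S_{k}}$. Hence $\ideal_{L_{1}} = \ideal_{L_{2}}$.

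Next I would simply unfold the definition of the ideal recalled in Section~\ref{SS:gentrace}. Since $L_{2}$ is an object of $\ideal_{L_{2}} = \ideal_{L_{1}}$, the definition of $\ideal_{L_{1}}$ provides an object $X_{1}$ of $\Fcat_{\0}$ together with morphisms $\alpha \colon L_{2} \to L_{1}\otimes X_{1}$ and $\beta \colon L_{1}\otimes X_{1} \to L_{2}$ with $\beta \circ \alpha = \Id_{L_{2}}$; this exhibits $L_{2}$ as a direct summand of $L_{1}\otimes X_{1}$. Interchanging the roles of $L_{1}$ and $L_{2}$ produces the required $X_{2}$ with $L_{1}$ a direct summand of $L_{2} \otimes X_{2}$.

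There is no real obstacle beyond invoking \cref{T:onedirection}; the content of the corollary is to repackage the equality $\ideal_{L_{1}} = \ideal_{L_{2}}$ in purely representation-theoretic language. One small bookkeeping point worth flagging in the write-up is that \cref{T:onedirection} is stated in the category $\Fcat_{\0}$, which is the setting in which the corollary is also phrased, so no additional parity-change argument is needed.
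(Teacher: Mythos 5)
Your proposal is correct and follows exactly the route the paper intends: the corollary is stated there as the representation-theoretic restatement of Proposition~\ref{T:onedirection}, and deducing $\ideal_{L_{1}}=\ideal_{S_{k}}=\ideal_{L_{2}}$ and then unfolding the definition of the ideal (noting $L_{i}\in\ideal_{L_{i}}$ via tensoring with the trivial supermodule) is precisely the argument. No gaps.
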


By the previous theorem for each $ 0 \leq k \leq \defect(\fg)$, the ideal generated by a simple supermodule of atypicality $k$ is independent of the choice of simple supermodule.  Consequently, we write $\ideal_{k}$ for the ideal generated by a simple of atypicality $k$.  In particular, $\ideal_{0} = \Proj$ (as typical supermodules are projective) and $\ideal_{\defect(\fg )} = \Fcat_{\0}$ (as the trivial supermodule has atypicality equal to the defect and generates the entire category). Furthermore it is not difficult to see using the translation functors of \cite[Sections 5-6]{GS} that for each atypicality $k = 1, \dotsc , \defect (\fg )$, there is a simple supermodule of atypicality $k$, $L$, and simple supermodule of atypicality $k-1$, $L'$, so that $L'$ is an object in $\ideal_{L}$.  Thus we have 
\[
\ideal_{0} \subseteq \ideal_{1} \subseteq \dotsb \subseteq \ideal_{\defect (\fg)}.
\]  

Given $x \in \Xvar$ of rank $k$, we write $\mt$ for the trace on $\ideal_{k}$ defined by \eqref{E:mtracedef} and $\md$ for the corresponding modified dimension function.

\begin{prop}\label{T:onedirection2}  Let  $0 < k \leq \defect (\fg )$ and let $L$ be a simple supermodule of atypicality strictly less than $k$.  Then $L$ is an object of $\ideal_{k}$ and $\md(L) = 0$ and $\ideal_{L} \subsetneq \ideal_{k}$.
\end{prop}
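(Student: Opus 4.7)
The plan is to verify the three claims in sequence by exploiting the fibre functor $M \mapsto M_x$, where $x \in \Xvar$ is the rank $k$ element used in Section~\ref{SS:ideals} to define the trace $\mt$ on $\ideal_k$.

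The containment $L \in \ideal_k$ is immediate from the chain $\ideal_0 \subseteq \ideal_1 \subseteq \dotsb \subseteq \ideal_{\defect(\fg)}$ established just before the proposition, since $\atyp(L) < k$ puts $L$ in some $\ideal_j$ with $j < k$.

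For the vanishing $\md(L) = 0$, the crux is the Duflo--Serganova observation that $L_x = 0$ whenever $\rank(x) > \atyp(L)$; in our situation $\rank(x) = k > \atyp(L)$, so this applies. Granted this, the definition \eqref{E:mtracedef} gives
\[
\md(L) = \mt_L(\Id_L) = \mt^p_{L_x}(\Id_{L_x}) = 0,
\]
since the identity on the zero supermodule is sent to $0$ by any trace.

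For the strict inclusion $\ideal_L \subsetneq \ideal_k$, we already have $\ideal_L \subseteq \ideal_k$ since $L \in \ideal_k$. To see strictness, I will show $S_k \notin \ideal_L$. Suppose for contradiction that $S_k$ is a direct summand of $L \otimes X$ for some $X$ in $\Fcat_{\0}$. Applying the fibre functor, which is monoidal (as noted in Section~\ref{SS:fibrefunctor}) and, being a functor, preserves direct summands, exhibits $(S_k)_x$ as a summand of $L_x \otimes X_x = 0$, so $(S_k)_x = 0$. But $\atyp(S_k) = k = \rank(x)$, so by \cite[Corollary 2.2]{serganova4} we have $(S_k)_x \cong T \otimes C_x(S_k)$ (or $T \otimes C'_x(S_k) \oplus T^\sigma \otimes C''_x(S_k)$ in the $\osp(2m|2n)$ case), and the multiplicity superspace has nonzero superdimension by \cite[Theorem 2.3]{serganova4}, whence $(S_k)_x \neq 0$. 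This contradiction establishes the strict inclusion.

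The main obstacle is the vanishing $L_x = 0$ for $\rank(x) > \atyp(L)$: this is the standard Duflo--Serganova bound on the associated variety $\{x \in \Xvar : L_x \neq 0\}$, and once it is cited, the rest is formal bookkeeping with the fibre functor and Serganova's classification already deployed in the proof of Theorem~\ref{T:EverySimpleIsAmbi}.
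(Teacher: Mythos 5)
Your argument is correct, and the first two claims are handled exactly as in the paper: membership $L\in\ideal_{k}$ comes from the chain of ideals discussed just before the proposition, and $\md(L)=0$ comes from the vanishing $L_{x}=0$ for $\rank(x)=k>\atyp(L)$ (the paper cites \cite[Theorem 2.1]{serganova4} for this; you should cite it rather than leave it as "standard", but it is the same fact). Where you diverge is the strictness $\ideal_{L}\subsetneq\ideal_{k}$: the paper simply invokes \cite[Theorem 4.2.1]{GKP}, the general ribbon-category result that an object with vanishing modified dimension generates a strictly smaller ideal, whereas you argue directly that $S_{k}\notin\ideal_{L}$ by applying the fibre functor (monoidal, and preserving retracts) to a putative splitting $S_{k}\mid L\otimes X$, getting $(S_{k})_{x}=0$, and contradicting $(S_{k})_{x}\neq 0$, which follows from \cite[Corollary 2.2, Theorem 2.3]{serganova4} exactly as in the proof of Theorem~\ref{T:EverySimpleIsAmbi}. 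Both routes are valid. The paper's is shorter and keeps the strictness conceptually tied to the modified dimension function, which is the shape of the generalized Kac--Wakimoto statement; yours is more self-contained on the categorical side (no appeal to \cite[Theorem 4.2.1]{GKP}) and in fact proves a slightly stronger, purely representation-theoretic fact --- no simple of atypicality $k$ can appear as a summand of $L\otimes X$ --- at the cost of re-invoking Serganova's structural results about $(S_{k})_{x}$, which the paper has already deployed anyway.
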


\begin{proof}  The fact that $L$ is an object of $\ideal_{k}$ follows from the discussion preceeding the proposition. We now compute $\md (L)$ using the definition of $\mt$ on $\ideal_{k}$. Since $L$ has atypicality strictly less than $k$ we know by  \cite[Theorem 2.1]{serganova4} that $L_{x}=0$.  It is then immediate that $\md (L)=0$.  Since $L$ is an object of $\ideal_{k}$, we have $\ideal_{L} \subseteq \ideal_{k}$.  However, by \cite[Theorem 4.2.1]{GKP} the vanishing of the modified dimension implies that the inclusion is strict.
\end{proof}

Combining the above results we have Conjecture~\ref{C:GenKW}.   We  also have the following description of the ideals defined by simple objects.
\begin{theorem}\label{T:chainofideals}  If $\ideal_{k}$ denotes the ideal defined by a simple supermodule of atypicality $k$ in $\Fcat_{\0}$, then $\ideal_{k}$ is independent of this choice.  Furthermore, these ideals form the following chain of inclusions 
\[
\Proj = \ideal_{0} \subsetneq \ideal_{1} \subsetneq \ideal_{2}\subsetneq \dotsb \subsetneq  \ideal_{\defect(\fg )} = \Fcat_{\0}.
\]
\end{theorem}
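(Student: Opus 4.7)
The plan is to assemble the statement from the two preceding propositions and the discussion that precedes them; almost every ingredient is already in place, so the task is really one of bookkeeping.

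First I would dispense with the independence claim. If $L_1$ and $L_2$ are two simple supermodules of atypicality $k$, then both lie in the canonical ideal $\ideal_{S_k}$ introduced in Section~\ref{SS:ideals} (for $k=0$ this is $\Proj$ and for $k>0$ this is \cite[Lemma 6.3]{serganova4}). Proposition~\ref{T:onedirection} applied to each of $L_1$ and $L_2$ gives $\ideal_{L_1}=\ideal_{S_k}=\ideal_{L_2}$, so the notation $\ideal_k$ is unambiguous.

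Next I would handle the two endpoints. For $k=0$, Kac's theorem \cite[Theorem 1]{Kac2} says a typical simple supermodule $T$ is projective, so $\ideal_T \subseteq \Proj$; conversely $\Proj=\ideal_P$ for any indecomposable projective $P$ and $T$ is a direct summand of some such $P$ tensored with a suitable object (explicitly, $T$ itself is projective), giving $\Proj \subseteq \ideal_T$. For $k=\defect(\fg)$ one takes $L$ to be the trivial supermodule $\unit$; it has atypicality $\defect(\fg)$, and since every object $M$ of $\Fcat_{\0}$ satisfies $M\cong \unit \otimes M$, we get $\ideal_{\unit}=\Fcat_{\0}$.

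For the chain of inclusions itself, the discussion preceding Proposition~\ref{T:onedirection2} (using the translation functors of \cite[Sections 5--6]{GS}) provides for each $k=1,\dotsc,\defect(\fg)$ a simple supermodule $L$ of atypicality $k$ and a simple supermodule $L'$ of atypicality $k-1$ with $L'$ in $\ideal_L$; combined with the independence established above this gives $\ideal_{k-1} \subseteq \ideal_k$. For strictness, fix any simple $L'$ of atypicality $k-1$, so $\ideal_{k-1}=\ideal_{L'}$; applying Proposition~\ref{T:onedirection2} to $L'$ (which has atypicality strictly less than $k$) yields $\ideal_{L'}\subsetneq \ideal_k$, i.e., $\ideal_{k-1}\subsetneq \ideal_k$.

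There is no real obstacle here since the hard analytic content, namely the nonvanishing of the modified dimension at the correct atypicality and its vanishing below (via the fibre functor and Serganova's results \cite[Theorems 2.1, 2.3]{serganova4}), is already packaged into Propositions~\ref{T:onedirection} and~\ref{T:onedirection2}. The only thing that requires a moment of care is the strictness step: one must invoke Proposition~\ref{T:onedirection2} with a simple of atypicality exactly $k-1$ rather than merely any object of $\ideal_{k-1}$, so that the conclusion $\ideal_{L'}\subsetneq \ideal_k$ translates directly into $\ideal_{k-1}\subsetneq \ideal_k$ via the independence claim.
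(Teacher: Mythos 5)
Your proposal is correct and matches the paper's (implicit) argument: the theorem is stated there as a direct consequence of Proposition~\ref{T:onedirection} (independence and the identification $\ideal_{L}=\ideal_{S_k}$), the translation-functor discussion giving $\ideal_{k-1}\subseteq\ideal_{k}$, and Proposition~\ref{T:onedirection2} for strictness, exactly as you assemble them. Your handling of the endpoints $\ideal_{0}=\Proj$ and $\ideal_{\defect(\fg)}=\Fcat_{\0}$ likewise coincides with the paper's remarks.
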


\section{Support Varieties}\label{S:supports}  

\subsection{}\label{SS:GenKWapp}  Given a classical Lie superalgebra $\fa$ and an object $M$ in $\Fcat (\fa)$, let $\mathcal{V}_{(\fa, \fa_{\0})}(M)$  denote the support variety of $M$ as defined in \cite{BKN1} and let $c_{\Fcat(\fa)}(M)$ the complexity of $M$ in $\Fcat (\fa)$ (i.e.\ the rate of growth of a minimal projective resolution of $M$ in $\Fcat$).  As an application of the generalized Kac-Wakimoto conjecture we see that for a simple supermodule these depend only on atypicality.

\begin{theorem}\label{T:constantatypicality}  Let $\fg$ denote $\gl (m|n)$ or $\osp (m|2n)$ and let $L_{1}$ and $L_{2}$ be two simple objects of $\Fcat (\fg )$ of the same atypicality.  Let $\fa \subseteq \fg$ denote a subalgebra of $\fg$ which is itself a classical Lie superalgebra.  Then 
\begin{align}\label{E:constantonatyp}
\mathcal{V}_{(\fa , \fa_{\0})}(L_{1}) &= \mathcal{V}_{(\fa , \fa_{\0})}(L_{2}) \\
c_{\Fcat(\fa )}(L_{1}) &= c_{\Fcat(\fa )}(L_{2}) 
\end{align}
\end{theorem}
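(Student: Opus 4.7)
The plan is to deduce both equalities from Corollary~\ref{C:corollary} by combining it with standard formal properties of support varieties and complexity under direct summands and tensor products. By Corollary~\ref{C:corollary}, applied in $\Fcat_{\0}$ and transferred to $\Fcat(\fg)$ via the parity-change remark of Section~\ref{SS:gentrace}, the hypothesis $\atyp(L_1)=\atyp(L_2)$ produces supermodules $X_1, X_2 \in \Fcat(\fg)$ such that $L_2$ is a direct summand of $L_1 \otimes X_1$ and $L_1$ is a direct summand of $L_2 \otimes X_2$.  Since restriction from $\fg$ to $\fa$ preserves tensor products and direct sums, the same relations hold in $\Fcat(\fa)$.

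For the support variety equality, I would invoke two formal properties of $\V_{(\fa,\fa_{\0})}(-)$ established in \cite{BKN1}: (i) $\V_{(\fa,\fa_{\0})}(M) \subseteq \V_{(\fa,\fa_{\0})}(N)$ whenever $M$ is a direct summand of $N$, and (ii) the tensor product inclusion $\V_{(\fa,\fa_{\0})}(M \otimes N) \subseteq \V_{(\fa,\fa_{\0})}(M) \cap \V_{(\fa,\fa_{\0})}(N)$. Concatenating these yields
\[
\V_{(\fa,\fa_{\0})}(L_2) \subseteq \V_{(\fa,\fa_{\0})}(L_1 \otimes X_1) \subseteq \V_{(\fa,\fa_{\0})}(L_1),
\]
and symmetrically the reverse inclusion, forcing equality.

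For the complexities, I would run exactly the same argument using the parallel facts that $c_{\Fcat(\fa)}$ is monotone under passage to direct summands and that $c_{\Fcat(\fa)}(M \otimes X) \le c_{\Fcat(\fa)}(M)$ for any $X \in \Fcat(\fa)$. The latter reduces to observing that tensoring a minimal projective resolution $P_\bullet \to M$ in $\Fcat(\fa)$ over $\C$ with $X$ yields a projective resolution of $M \otimes X$ of the same rate of growth, because $\Fcat(\fa)$ is a Frobenius tensor category in which projectives are closed under tensor products with finite-dimensional objects.  The main obstacle I expect is precisely this closure property for the specific classical $\fa$ at hand; it is standard via the Hopf-superalgebra structure of $U(\fa)$ and the coincidence of projectives and injectives in $\Fcat(\fa)$, but it is really the only nontrivial ingredient beyond Corollary~\ref{C:corollary}.
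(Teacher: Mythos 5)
Your proposal is correct and takes essentially the same approach as the paper: both halves are deduced from Corollary~\ref{C:corollary} (transferred to $\Fcat$ via parity change) together with the formal direct-summand and tensor-product properties of support varieties and complexity, applied symmetrically in $L_1$ and $L_2$. The only cosmetic difference is that for the complexity equality the paper cites the argument of \cite[Theorem 8.1.1]{BKN4} with one reference replaced, whereas you spell out the same mechanism directly by tensoring a minimal projective resolution with $X$ and using that projectives in $\Fcat(\fa)$ are closed under tensoring with finite-dimensional objects.
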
 
\begin{proof}  By Corollary~\ref{C:corollary} there is a $\fg$-supermodule $X$ such that $L_{1}$ is a direct summand of $L_{1} \otimes X$.  By the basic properties of support varieties \cite[Equations (4.6.3) and (4.6.4)]{BKN2} this implies 
\[
\mathcal{V}_{(\fa , \fa_{\0})}(L_{1}) \subseteq \mathcal{V}_{(\fa , \fa_{\0})}(L_{2} \otimes X) \subseteq \mathcal{V}_{(\fa , \fa_{\0})}(L_{2}) \cap \mathcal{V}_{(\fa , \fa_{\0})}(X)  \subseteq \mathcal{V}_{(\fa , \fa_{\0})}(L_{2}).
\]  However, this argument is symmetric in $L_{1}$ and $L_{2}$ and so we have the equality of support varieties.

 To prove equality of complexity, we observe that the argument used for $\gl(m|n)$ in the proof of \cite[Theorem 8.1.1]{BKN4} applies verbatim with the exception that references to \cite[Corollary 6.7]{serganova4} should be replaced with references to the generalized Kac-Wakimoto conjecture. 
\end{proof}

\subsection{}\label{SS:blockEq}  Using Theorem~\ref{T:constantatypicality} and the line of argument for $\gl (m|n)$ used in \cite{BKN2}, we now compute the support varieties for the simple supermodules of $\Fcat$.  If $L(\lambda)$ is typical, then it is projective by \cite[Theorem 1]{Kac2} and the support variety is trivial.  Theorem~\ref{T:typeA} immediately follows.  Consequently we assume $\atyp (L(\lambda)) >0$ in what follows.  

 Given $0 < k \leq \defect (\fg)$, let $\fg_{k}$ be the subalgebra of $\fg$ defined as follows:
\begin{itemize}
\item for $\fg =\gl (m|n)$, $\fg_{k}= \gl (k|k)$,
\item for $\fg = \osp (2m+1|2n)$,  $\fg_{k}= \osp (2k+1|2k)$,
\item for $\fg = \osp(2m|2n)$, $\fg_{k}= \osp (2k|2k)$.
\end{itemize} 

We identify $\fg_{k}$ as a subalgebra of $\fg$ as follows.  For $\gl(m|n)$, $\fg_{k}$ is the subalgebra isomorphic to $\gl (k|k)$ whose roots lie in the intersection of $\Phi$ with the $\mathbb{R}$-span of $\varepsilon_{m-k+1}, \dotsc , \varepsilon_{m}, \delta_{1}, \dotsc , \delta_{k}$.  Similarly, for $\osp (2m|2n)$ and $\osp (2m+1|2n)$, $\fg_{k}$ is the subalgebra whose roots like in the intersection of $\Phi$ with the $\mathbb{R}$-span of $\varepsilon_{m-k+1}, \dotsc , \varepsilon_{m}, \delta_{n-k+1}, \dotsc , \delta_{n}$.  In particular, note that $\fg_{k}$ has defect $k$.

 Let $\mathcal{Z}=\mathcal{Z}(U(\fg))$ denote the center of the universal enveloping superalgebra of $\fg$.  Given a simple $\fg$-supermodule $L(\lambda)$ of highest weight $\lambda$ we may use Schur's lemma to define an algebra homomorphism $\chi_{\lambda}: \mathcal{Z} \to \C $ by the equation $zv = \chi_{\lambda}(z)v$ for all $z \in \mathcal{Z}$ and all $v \in L(\lambda)$.  Using these central characters we have a decomposition of $\Fcat$ into blocks 
\[
\Fcat = \bigoplus \Fcat^{\chi},
\] where the direct sum runs over all algebra homomorphisms $\chi :  \mathcal{Z} \to \C$.   It is known that all simple supermodules in $\Fcat^{\chi}$ have the same atypicality and so it makes to refer to this as the atypicality of the block.  In particular, the principal block of $\Fcat (\fg_{k})$ has atypicality $k$.  Gruson and Serganova prove that every block\footnote{More precisely, for $\osp (2m|2n)$ half the blocks of atypicality $k$ are equivalent to the principle block of $\osp (2k+2|2k)$.  See \cite[Section 5]{GS} for details.  However, by Theorem~\ref{T:constantatypicality} we may safely assume that our simple supermodule does not lie in one of these blocks. } of $\Fcat (\fg )$ of atypicality $k$ is equivalent to the principle block of $\Fcat (\fg_{k})$.  We will need to study the functor which gives this equivalence.

 Let $\fl$ denote the subalgebra $\fg_{k} + \fh \subseteq \fg$.  Fix a choice of $\fh ' \subset \fh$ so that $\fh '$ is a central subalgebra of $\fl$ and $\fl = \fg_{k}\oplus \fh'$.  Given $\lambda \in \fh^{*}$, let $\lambda' \in (\fh')^{*}$ denote the map obtained by restricting $\lambda$ to $\fh '$.  Given a dominant weight $\mu$, Gruson and Serganova call $\mu$ \emph{stable} if $A(\mu)$ (where $A(\mu)$ is as in \eqref{E:atypset}) is a subset of the roots for $\fl$ and $(\mu +\rho, \beta)>0$ for all $\beta \in \Phi_{\0}^{+}$ which are not roots of $\fl$.   Say $\lambda$ and $\mu$ are stable dominant integral weights and $\chi_{\mu}=\chi_{\lambda}$.  Then we have by \cite[Section 3]{GS} and references therein that $\mu$ can be written as $w(\lambda + \rho +\sum_{i}n_{i}\alpha_{i}) - \rho$ where $w \in  W$, the Weyl group of $\fg_{\0}$, the sum is over the elements of $A(\lambda)$, and $n_{i} \in \C$ for all $i$.   From this it follows that $\lambda' = \mu'$.     If $\mu$ is a stable dominant integral weight, then is straightforward to verify that on $L(\mu)$  the Gruson-Serganova functor given in \cite[Section 5]{GS} coincides with the functor $\Res_{\mu'}: \Fcat (\fg ) \to \Fcat (\fg_{k})$ given by
\begin{equation}\label{E:Resprime}
 \Res_{\mu'}(N)=\left\{n \in N \mid h'n = \mu' (h')n \text{ for all } h' \in \fh' \right\}.
\end{equation}  Note that this is indeed a $\fg_{k}$-supermodule as $\fh'$ commutes with $\fg_{k}$.  Let  $N$ be an object of $\Fcat^{\chi_{\mu}}$ such that for every composition factor $L(\gamma)$ of $N$, the weight $\gamma$ is stable.    An induction on composition series length using that $\gamma'=\mu'$ shows that the Gruson-Serganova functor coincides with $\Res_{\mu'}$ on $N$.

\subsection{}   The inclusion $\fg_{k} \hookrightarrow \fg $ induces a map in relative cohomology, 
\[
\res : \HH^{\bullet}(\fg , \fg_{\0 }; M) \to  \HH^{\bullet}(\fg_{k}, \fg_{k, \0 }; M),
\] for any $M$ in $\Fcat(\fg).$  Note that this coincides with the map induced by the restriction functor, $\Res :\Fcat (\fg) \to \Fcat (\fg_{k}).$  We then have the following commutative diagram. 

\begin{equation}\label{E:commute}
\begin{CD}
 I_{\fg }(M) \hookrightarrow \HH^{\bullet}(\fg, \fg _{\0 }; \C)  @>^{m_{1}}>>        \HH^{\bullet}(\fg, \fg_{ \0 }; M\otimes M^{*}) \\
@V\res_{\C} VV                                                           @VV\res V\\
  I_{\fg_{k}}(M) \hookrightarrow \HH^{\bullet}(\fg_{k}, \fg_{k, \0 }; \C)  @>^{m_{2}}>>        \HH^{\bullet}(\fg_{k}, \fg_{k, \0 }; M\otimes M^{*}) 
\end{CD}
\end{equation}
Here the horizontal maps are those induced by the exact functor $-\otimes M$, and $I_{\fg }(M)$ (resp.\ $I_{\fg_{k}}(M)$) is the kernel of this map. Recall that this is the ideal which defines $\mathcal{V}_{(\fg, \fg_{\0})}(M)$ (resp.\  $\mathcal{V}_{(\fg_{k}, \fg _{k, \0})}(M)$).  

For clarity in our notation we shall capitalize the names of functors and call the induced maps in cohomology by the same name but in lower case.  For example, in \eqref{E:commute} $\res_{\C}$ denotes the map induced by the restriction functor $\Res$ (with coefficients in the trivial supermodule).
Let $J$ denote the kernel of $\res_{\C}$.  Fix $ d \geq 0$ so that $J$ is generated by elements of degree no more than $d$.   Such a $d$ exists because $\HH^{\bullet}(\fg, \fg _{\0 }; \C)$ is a Noetherian ring (indeed by \cite[Theorem 4.1.1]{BKN1} it is a polynomial ring).  Now choose a dominant integral weight $\lambda$ and let $P_{\bullet} \to L(\lambda)$ be a fixed projective resolution of $L(\lambda)$ in $\Fcat (\fg )$.  We set $\Gamma$ to be the set of highest weights of the composition factors of $P_{0}, \dotsc , P_{d}$.  Applying the algorithm given in the proof of \cite[Lemma 12]{DS} we may choose $\lambda$ so that $\gamma$ is stable for all $\gamma \in \Gamma$.  

Let us write $\fe \subseteq \fg$ and $\tilde{\fe} \subseteq \fg_{k}$ for the detecting subalgebras as defined in \cite[Section 4]{BKN1}.  We may assume that $\tilde{\fe}_{\1 } \subseteq \fe_{\1 }$.  To see this, we see that one can choose a set $\Omega$ as in \cite[Table 2]{BKN1} to obtain an explicit basis for $\fe_{\1}$ for which $\tilde{\fe}_{\1 } \subseteq \fe_{\1 }$. The following proposition records certain properties of \eqref{E:commute} and is straightforward generalization of \cite[Proposition 4.7.3]{BKN2}.  For completeness we include the proof.

\begin{prop}\label{P:diagrams}    Let $J$ denote the kernel of the map $\res_{\C}$ and fix $ d \geq 0$ so that $J$ is generated by elements of degree no more than $d.$  Then the following statements about \eqref{E:commute} hold true.
\begin{enumerate}
\item [(a)] The map $\res_{\C}$ is a surjective algebra homomorphism.
\item  [(b)] Let $M=L(\lambda)$ be a simple supermodule in $\Fcat$ of atypicality $k.$  Then the map $m_{2}$ is injective. 
\item  [(c)] Assume $M=L(\lambda)$ be a simple supermodule in $\Fcat $ so that the elements of the set $\Gamma$ defined above are stable.  Then $J \subseteq I_{\fg }(M).$
\end{enumerate}
\end{prop}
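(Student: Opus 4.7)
For part (a), I would identify $\HH^\bullet(\fg,\fg_{\0};\C)$ and $\HH^\bullet(\fg_k,\fg_{k,\0};\C)$ with their explicit polynomial invariant descriptions from \cite[Section 4]{BKN1}, under which $\res_{\C}$ becomes pullback along the inclusion of detecting subalgebras. Because $\tilde{\fe}_{\1}\subseteq\fe_{\1}$ was arranged in advance and both cohomology rings are realised as invariant rings on these detecting subspaces, surjectivity follows by a direct comparison of the generators listed there. The algebra-homomorphism statement is automatic from the naturality of the cup product under restriction.

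For part (b), I would reduce the injectivity of $m_2$ to the statement that the support variety $\V_{(\fg_k,\fg_{k,\0})}(M)$ fills $\V_{(\fg_k,\fg_{k,\0})}(\C)$ (which is legitimate because the cohomology ring is polynomial, hence reduced, by \cite[Theorem 4.1.1]{BKN1}). Via the rank variety interpretation this amounts to finding $x\in\Xvar(\fg_k)$ of rank $k=\defect(\fg_k)$ with $M_x\neq 0$. Since $M=L(\lambda)$ has atypicality $k$ as a $\fg$-supermodule and any $x$ of rank $k$ in $\fg_k$ also has rank $k$ in $\fg$, the nonvanishing is immediate from \cite[Theorem 2.1]{serganova4}.

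For part (c) the essential input is the Gruson-Serganova block equivalence recalled in \S\ref{SS:blockEq}. Because $\Gamma$ was chosen so that every composition factor of $P_0,\dotsc,P_d$ has a stable highest weight, on this initial segment of the resolution the functor $\Res_{\mu'}$ (the summand of plain restriction at the $\fh'$-weight $\mu'$) coincides with the block equivalence into the principal block of $\Fcat(\fg_k)$. Being an equivalence, it is fully faithful on the morphisms in the block in question, and the standard comparison between $\Ext^\bullet_{\fg}(\C,-)$ and relative cohomology (which applies by the projective-cover remark in Section~\ref{S:Prelims}) yields that $\res:\HH^{i}(\fg,\fg_{\0};M\otimes M^{*})\to\HH^{i}(\fg_k,\fg_{k,\0};M\otimes M^{*})$ is injective for $i\leq d$.

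To finish, for any generator $z\in J$ of degree at most $d$, commutativity of \eqref{E:commute} gives $\res(m_1(z))=m_2(\res_{\C}(z))=0$, so by the injectivity just established $m_1(z)=0$, i.e.\ $z\in I_{\fg}(M)$. Since $J$ is generated as an ideal by such low-degree elements and $I_{\fg}(M)$ is an ideal, we conclude $J\subseteq I_{\fg}(M)$. The main technical hurdle is in part (c)---pinning down that the Gruson-Serganova functor genuinely agrees with $\Res_{\mu'}$ on the whole of $P_0,\dotsc,P_d$ (not merely on the head $L(\lambda)$), which is precisely where the careful prior choice of $\lambda$ and the stability of every weight in $\Gamma$ pays off.
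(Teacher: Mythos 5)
Your parts (a) and (c) run along the same lines as the paper's own proof: for (a) you identify both cohomology rings with invariant rings on the detecting subalgebras via \cite[Theorem 3.3.1(a)]{BKN1} and observe surjectivity of restriction of functions from $\fe_{\1}$ to $\tilde{\fe}_{\1}$; for (c) you use stability of every weight in $\Gamma$ to identify the Gruson--Serganova equivalence with $\Res_{\lambda'}$ on $P_{0},\dotsc,P_{d}$, deduce injectivity of the induced map on cohomology in degrees at most $d$ (noting that $\Res_{\lambda'}$ is a summand of honest restriction, so injectivity passes to $\res$), and then finish with the commutativity of \eqref{E:commute} and the degree bound on the generators of $J$. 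That is exactly the paper's argument.

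Part (b), however, has a genuine gap. The reduction of injectivity of $m_{2}$ to the equality $\V_{(\fg_{k},\fg_{k,\0})}(L(\lambda))=\V_{(\fg_{k},\fg_{k,\0})}(\C)$ is fine (the ring is polynomial, hence reduced), but the next step is not: you claim that ``via the rank variety interpretation'' fullness of the support amounts to exhibiting a single $x\in\Xvar(\fg_{k})$ of rank $k$ with $L(\lambda)_{x}\neq 0$. First, the rank-variety theorem available in the cited sources \cite{BKN2,BKN3} describes the \emph{detecting subalgebra} support $\V_{(\tilde{\fe},\tilde{\fe}_{\0})}(M)$ in terms of projectivity of $M$ over $\langle x\rangle$ for $x\in\tilde{\fe}_{\1}$; such $x$ are generally not self-commuting, so the Duflo--Serganova fibre $M_{x}$ is not even defined for them, and no identification of the cohomological support $\V_{(\fg_{k},\fg_{k,\0})}(M)$ with the associated variety $\{x\in\Xvar(\fg_{k})\mid M_{x}\neq 0\}$ is available---indeed such an identification is essentially the conjecture this paper is in the business of proving, so invoking it here is circular. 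Second, even granting some rank-variety picture, nonvanishing at one maximal-rank point only shows the (closed, conical) support variety is nonzero; it does not force it to equal all of $\operatorname{MaxSpec}\HH^{\bullet}(\fg_{k},\fg_{k,\0};\C)\cong\mathbb{A}^{k}$, since the cohomological support is not known to be stable under a group acting with dense orbit on the maximal-rank locus. The paper closes this differently: choosing $\lambda$ stable, it uses the $\fg_{k}$-decomposition $L(\lambda)=\Res_{\lambda'}(L(\lambda))\oplus G_{\lambda'}(L(\lambda))$, notes that $\Res_{\lambda'}(L(\lambda))$ is, via the Gruson--Serganova equivalence, a simple $\fg_{k}$-supermodule of maximal atypicality $k$, and then applies Theorem~\ref{T:constantatypicality} (twice) to get $\V_{(\fg_{k},\fg_{k,\0})}(L(\lambda))=\V_{(\fg_{k},\fg_{k,\0})}(\C)$ and hence $I_{\fg_{k}}(L(\lambda))=(0)$. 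You should replace your fibre-functor step with a direct-summand argument of this kind.
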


\begin{proof} 

\noindent  One proves (a) as follows.     By \cite[Theorem 3.3.1(a)]{BKN1} there are finite pseudoreflection groups $\mathcal{W}$ and $\widetilde{\mathcal{W}}$ for which restriction induces isomorphisms  $ \HH^{\bullet}(\fg, \fg_{\0 }; \C) \to S(\fe_{\1}^{*})^{\mathcal{W}}$ and $ \HH^{\bullet}(\fg_{k}, \fg_{k, \0 }; \C)  \to  S(\tilde{\fe}_{\1}^{*})^{\widetilde{\mathcal{W}}}.$

From the identification $\widetilde{\fe}_{\1}\subseteq \fe_{\1},$ one has the canonical algebra homomorphism given by restriction of functions
\[
\rho: S(\fe^{*}_{\1})^{\mathcal{W}} \to S(\tilde{\fe}^{*}_{\1})^{\widetilde{\mathcal{W}}}.
\] and the explicit description of $\fe$ and $\widetilde{\fe}$ allows one to verify that this map is surjective.

As all maps are induced by restrictions, one has the following commutative diagram.
\begin{equation*}
\begin{CD}
 \HH^{\bullet}(\gl(m|n), \gl (m|n)_{\0 }; \C)  @>_{\simeq}>>       S(\fe_{\1}^{*})^{\mathcal{W}} \\
@V\res_{\C} VV                                                           @VV \rho  V\\
  \HH^{\bullet}(\gl(k|k), \gl (k|k)_{\0 }; \C)  @>_{\simeq}>>         S(\tilde{\fe}_{\1}^{*})^{\widetilde{\mathcal{W}}} 
\end{CD}
\end{equation*}  Therefore, the map $\res_{\C}$ is surjective.

To prove (b) one argues as follows. We first assume that $\lambda$ is stable.  We then have the decomposition
\begin{equation}\label{E:SimpleDecomp}
L(\lambda) = \Res_{\lambda'}(L(\lambda)) \oplus G_{\lambda'}(L(\lambda))
\end{equation}
as $\fg_{k}$-supermodules, where 
\begin{equation}\label{E:Gmudef}
G_{\lambda'}(L(\lambda))=\sum_{\substack{\nu \in (\mathfrak{h}')^{*}\\ \nu \neq \lambda'}}\{x \in L(\lambda) \mid hx = \nu(h)x \text{ for all } h \in \mathfrak{h}'\}. 
\end{equation}  Now since $\Res_{\lambda'}(L(\lambda))$ coincides with the output of the Gruson-Serganova equivalence it is a simple $\fg_{k}$-supermodule in the principle block of $\Fcat (\fg_{k})$.  That is, it is a simple supermodule of the same atypicality as the trivial $\fg_{k}$-supermodule.  By Theorem~\ref{T:constantatypicality}  this implies the following equality.  The remaining inclusions follow by the basic properties of support varieties: 
\[
\V_{(\fg_{k}, \fg_{k,\0})}\left( \C\right) = \V_{(\fg_{k}, \fg_{k,\0})}\left( \Res_{\lambda'}(L(\lambda))\right) \subseteq \V_{(\fg_{k}, \fg_{k,\0})}\left( L(\lambda)\right) \subseteq \V_{(\fg_{k}, \fg_{k,\0})}\left( \C\right).
\]  Therefore we have
\begin{equation}\label{E:equalvar}
\V_{(\fg_{k}, \fg_{k,\0})}\left(L(\lambda)\right) = \V_{(\fg_{k}, \fg_{k,\0})}\left( \C\right).
\end{equation}  Applying Theorem~\ref{T:constantatypicality} again, it follows that \eqref{E:equalvar} holds for arbitrary $L(\lambda)$ when $\lambda$ has atypicality $k$.  However $\HH^{\bullet}(\fg_{k}, \fg_{k,\0 }; \C)$ is a polynomial ring and so has no nonzero nilpotent elements.  This along with \eqref{E:equalvar} implies that $I_{\fg_{k}}(L(\lambda))=(0)$ and the injectivity of $m_{2}$ follows.

 We now prove $(c).$  By our assumption on $\Gamma$ the functor $\Res_{\lambda'}$ coincides with the Gruson-Serganova equivalence on the first $d$ degrees of cohomology and so $\res_{\lambda'}$ defines an isomorphism in cohomology in those degrees.  Let $\Res_{\fl}: \Fcat (\fg ) \to \Fcat (\fl )$ be the restriction functor and $P_{\lambda'}: \Fcat(\fl ) \to \Fcat (\fg_{k})$ be the functor given by projection onto the $\lambda'$ weight space with respect to the action of $\fh '$, then $\Res_{\lambda'} = P_{\lambda'} \circ \Res_{\fl}$.  Since  $\res_{\lambda'}$ is injective for $i=0, \dotsc , d$, $\res_{\fl}$ must also be injective in these degrees.  The fact that $\fl = \fg_{k}\oplus \fh '$ for a central abelian subalgebra $\fh '$ implies that the restriction functor $\Fcat (\fl) \to \Fcat (\fg_{k})$ induces an injective map on cohomology.   Composing the $\Res_{\fl}$ with this functor yields the restriction functor $\fg \to \fg_{k}$ and, hence, $\res$ is injective.  From this and the commutativity of the diagram \eqref{E:commute}, it follows that the generators of $J$ and hence $J$ itself lies in $\ideal_{\fg}(M)$.
\end{proof}

\subsection{}\label{SS:supports}  We can now compute the support varieties of the simple supermodules. Let $\fe \subseteq \fg$ be the detecting subalgebra of $\fg$.  Let $\mathcal{W}$ be the finite pseduoreflection groups given by \cite[Theorem 3.3.1(a)]{BKN1}. For any $\fg$-supermodule, $M$, the inclusion $\fe \hookrightarrow \fg$ induces a map of support varieties
\begin{equation}
\res^{*}: \V_{(\fe ,\fe_{\0})}(M) \to \V_{(\fg ,\fg_{\0})}(M)
\end{equation}
with image 
\begin{equation}\label{E:resmapsbetweenvarieties}
\res^{*}\left(  \V_{(\fe ,\fe_{\0})}(M)\right) \cong \V_{(\fe ,\fe_{\0})}(M) / \mathcal{W}.
\end{equation}

The proof of the following theorem closely parallels the analogous result in \cite{BKN2}.  We include the proof for completeness.
\begin{theorem}\label{T:typeA}  Let $\fg$ be $\gl (m|n)$ or $\osp (m|2n)$.  Let $L(\lambda)$ be a simple $\fg$-supermodule of atypicality $k.$  Let $\widetilde{\fe} \subseteq \fg_{k}$ be the detecting subalgebra of $\fg_{k}$ chosen so that $\widetilde{\fe}_{\1} \subseteq \fe_{\1}$.  Then, 

\begin{enumerate}
\item [(a)] 
\begin{equation}\label{E:gvariety}
\operatorname{res}^{*}(\widetilde{\fe}_{\1})=\operatorname{res}^{*}\left(\mathcal{V}_{(\fe, \fe_{\0})}\left( L(\lambda)\right) \right) = \mathcal{V}_{(\fg, \fg_{\0})}\left(L(\lambda) \right) \cong \mathbb{A}^{k}. 
\end{equation}
\item [(b)]  
\begin{equation}\label{E:evariety1}
\mathcal{V}_{(\fe ,\fe_{\0})}(L(\lambda)) = \mathcal{W} \cdot \widetilde{\fe}_{\1}.
\end{equation}
In particular, $\mathcal{V}_{(\fe ,\fe_{\0})}(L(\lambda))$ is the union of finitely many $k$-dimensional subspaces. 
\end{enumerate} 
\end{theorem}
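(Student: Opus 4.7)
By Theorem~\ref{T:constantatypicality} both $\mathcal{V}_{(\fg,\fg_{\0})}(L(\lambda))$ and $\mathcal{V}_{(\fe,\fe_{\0})}(L(\lambda))$ depend on $\lambda$ only through $k=\atyp(\lambda)$, so I would begin by replacing $\lambda$ with the weight produced at the end of Section~\ref{SS:blockEq} via the Duflo--Serganova algorithm, for which every composition factor of $P_{0},\dotsc,P_{d}$ is stable and Proposition~\ref{P:diagrams}(c) applies. For $\fg=\osp(2m|2n)$ I would simultaneously use Theorem~\ref{T:constantatypicality} to move $L(\lambda)$ into an atypicality-$k$ block equivalent to the principal block of $\fg_{k}=\osp(2k|2k)$, side-stepping the exceptional blocks equivalent to the principal block of $\osp(2k+2|2k)$.

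The heart of part~(a) is the identification $I_{\fg}(L(\lambda))=J$. Proposition~\ref{P:diagrams}(c) gives $J\subseteq I_{\fg}(L(\lambda))$ immediately. For the reverse inclusion, Proposition~\ref{P:diagrams}(b) says $m_{2}$ is injective, so $I_{\fg_{k}}(L(\lambda))=0$, and commutativity of \eqref{E:commute} forces $\res_{\C}(I_{\fg}(L(\lambda)))\subseteq I_{\fg_{k}}(L(\lambda))=0$, i.e.\ $I_{\fg}(L(\lambda))\subseteq\ker\res_{\C}=J$. Passing to varieties via Proposition~\ref{P:diagrams}(a) together with the BKN1 identifications $\HH^{\bullet}(\fg,\fg_{\0};\C)\cong S(\fe_{\1}^{*})^{\mathcal{W}}$ and $\HH^{\bullet}(\fg_{k},\fg_{k,\0};\C)\cong S(\widetilde{\fe}_{\1}^{*})^{\widetilde{\mathcal{W}}}$ of \cite[Theorem~3.3.1]{BKN1},
\[
\mathcal{V}_{(\fg,\fg_{\0})}(L(\lambda))=\mathcal{V}(J)=\res^{*}\!\bigl(\widetilde{\fe}_{\1}/\widetilde{\mathcal{W}}\bigr)=\res^{*}(\widetilde{\fe}_{\1}),
\]
the last equality because $\widetilde{\fe}_{\1}\twoheadrightarrow\widetilde{\fe}_{\1}/\widetilde{\mathcal{W}}$ is surjective. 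Chevalley--Shephard--Todd then identifies $\widetilde{\fe}_{\1}/\widetilde{\mathcal{W}}$ with $\mathbb{A}^{k}$, and the remaining equality $\res^{*}(\mathcal{V}_{(\fe,\fe_{\0})}(L(\lambda)))=\mathcal{V}_{(\fg,\fg_{\0})}(L(\lambda))$ needed in \eqref{E:gvariety} is exactly \eqref{E:resmapsbetweenvarieties} combined with the detecting property of $\fe$.

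For part~(b) I would pull (a) back along the finite $\mathcal{W}$-quotient $\res^{*}\colon\fe_{\1}\to\fe_{\1}/\mathcal{W}$. Relation \eqref{E:resmapsbetweenvarieties} identifies $\mathcal{V}_{(\fe,\fe_{\0})}(L(\lambda))/\mathcal{W}$ with $\mathcal{V}_{(\fg,\fg_{\0})}(L(\lambda))$, which by part~(a) equals $\widetilde{\fe}_{\1}/\widetilde{\mathcal{W}}$; hence $\mathcal{V}_{(\fe,\fe_{\0})}(L(\lambda))\subseteq\mathcal{W}\cdot\widetilde{\fe}_{\1}$. For the reverse inclusion, apply part~(a) to the pair $(\fg_{k},\widetilde{\fe})$ to obtain $\mathcal{V}_{(\widetilde{\fe},\widetilde{\fe}_{\0})}(L(\lambda))=\widetilde{\fe}_{\1}$; the natural restriction map in cohomology forces $\widetilde{\fe}_{\1}\subseteq\mathcal{V}_{(\fe,\fe_{\0})}(L(\lambda))$, and the $\mathcal{W}$-stability of the latter then yields $\mathcal{W}\cdot\widetilde{\fe}_{\1}\subseteq\mathcal{V}_{(\fe,\fe_{\0})}(L(\lambda))$, giving equality and realizing the variety as a finite union of $k$-dimensional linear subspaces.

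The delicate step I anticipate is the case-by-case check, for the $\osp$ families, that the pseudoreflection groups $\widetilde{\mathcal{W}}$ and $\mathcal{W}$ are realized compatibly on $\widetilde{\fe}_{\1}\subseteq\fe_{\1}$ so that the restriction of invariants $\rho\colon S(\fe_{\1}^{*})^{\mathcal{W}}\to S(\widetilde{\fe}_{\1}^{*})^{\widetilde{\mathcal{W}}}$ underlying Proposition~\ref{P:diagrams}(a) is surjective and so that $\widetilde{\fe}_{\1}/\widetilde{\mathcal{W}}$ embeds closedly in $\fe_{\1}/\mathcal{W}$. Once the explicit sets $\Omega$ of \cite[Table~2]{BKN1} are fixed, this becomes a direct combinatorial verification, after which the remainder of the argument transcribes the $\gl(m|n)$ proof of \cite{BKN2}.
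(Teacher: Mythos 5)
Your overall route is the paper's: reduce to a stable weight via Theorem~\ref{T:constantatypicality} and the Duflo--Serganova algorithm, use Proposition~\ref{P:diagrams} to get $I_{\fg}(L(\lambda))=\operatorname{Ker}(\res_{\C})$, conclude $\V_{(\fg,\fg_{\0})}(L(\lambda))\cong\operatorname{MaxSpec}\left(\HH^{\bullet}(\fg_{k},\fg_{k,\0};\C)\right)\cong\mathbb{A}^{k}$, and deduce (b) from the fact that the fibres of $\res^{*}$ are $\mathcal{W}$-orbits. The one place where your justification does not stand is the middle equality of \eqref{E:gvariety}, namely $\res^{*}\left(\V_{(\fe,\fe_{\0})}(L(\lambda))\right)=\V_{(\fg,\fg_{\0})}(L(\lambda))$. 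Equation \eqref{E:resmapsbetweenvarieties} only identifies the image of $\res^{*}$ with $\V_{(\fe,\fe_{\0})}(L(\lambda))/\mathcal{W}$; it gives the containment of that image in $\V_{(\fg,\fg_{\0})}(L(\lambda))$ and nothing more, and the ``detecting property'' of $\fe$ is a statement about cohomology with trivial coefficients (the isomorphism onto $S(\fe_{\1}^{*})^{\mathcal{W}}$), not a surjectivity or realization theorem for the support variety of an arbitrary supermodule. If such surjectivity were available as a black box, most of Section~\ref{S:supports} would be superfluous. As written this gap also infects the forward inclusion of your part (b), which quotes exactly that equality.

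The missing ingredient is the containment $\widetilde{\fe}_{\1}\subseteq\V_{(\fe,\fe_{\0})}(L(\lambda))$, which you only invoke later, and which cannot literally be obtained by ``applying part (a) to the pair $(\fg_{k},\widetilde{\fe})$'' since $L(\lambda)$ is not a simple $\fg_{k}$-supermodule. The paper gets it from the stable decomposition \eqref{E:SimpleDecomp}: $\Res_{\lambda'}(L(\lambda))$ is a simple $\fg_{k}$-direct summand of maximal atypicality $k$, so Theorem~\ref{T:constantatypicality} (with $\fa=\widetilde{\fe}$, comparing with the trivial supermodule) gives $\V_{(\widetilde{\fe},\widetilde{\fe}_{\0})}(L(\lambda))=\V_{(\widetilde{\fe},\widetilde{\fe}_{\0})}(\C)=\widetilde{\fe}_{\1}$, and the rank-variety description (non-projectivity of $L(\lambda)$ over $\langle x\rangle$ for $x\in\widetilde{\fe}_{\1}$, a statement unchanged when $x$ is viewed in $\fe_{\1}$) transports this to $\widetilde{\fe}_{\1}\subseteq\V_{(\fe,\fe_{\0})}(L(\lambda))$. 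With that in hand the paper closes (a) by the sandwich $\res^{*}(\widetilde{\fe}_{\1})\subseteq\res^{*}\left(\V_{(\fe,\fe_{\0})}(L(\lambda))\right)\subseteq\V_{(\fg,\fg_{\0})}(L(\lambda))\cong\mathbb{A}^{k}$ together with the observation that $\res^{*}$ is finite-to-one, so $\res^{*}(\widetilde{\fe}_{\1})$ is a closed $k$-dimensional subset of the irreducible $k$-dimensional variety $\mathbb{A}^{k}$, forcing all containments to be equalities. Your argument is repairable by the same reordering: establish $\widetilde{\fe}_{\1}\subseteq\V_{(\fe,\fe_{\0})}(L(\lambda))$ first, then either the dimension argument or your direct identification of the variety of $J$ with $\res^{*}(\widetilde{\fe}_{\1})$ (which does follow from the commutative square with $\rho$ in the proof of Proposition~\ref{P:diagrams}(a)) yields the middle equality, after which your part (b) goes through unchanged.
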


\begin{proof} One proves (a) as follows.   By Theorem~\ref{T:constantatypicality} we may compute the support variety of any simple supermodule of atypicality $k$.  We choose $L(\lambda)$ so that the statements of Proposition~\ref{P:diagrams} hold true.  By Proposition~\ref{P:diagrams}(c) we have that 
$\operatorname{Ker}(\res_{\C}) \subseteq I_{\fg}(L(\lambda)).$   On the other hand, it follows by the commutativity of \eqref{E:commute} and the injectivity of $m_{2}$ (Proposition~\ref{P:diagrams}(b)) that $I_{\fg}(L(\lambda)) \subseteq \operatorname{Ker}(\res_{\C}).$  Therefore, $I_{\fg}(L(\lambda)) = \operatorname{Ker}(\res_{\C}).$  Using the surjectivity of $\res_{\C}$ and the description of $\HH^{\bullet}(\fg_{k}, \fg_{k, \0 }; \C))$ as a polynomial ring in $k$ variables, we have
\begin{align*}
\mathcal{V}_{(\fg, \fg_{\0})}(L(\lambda)) &\cong \operatorname{MaxSpec}\left(\HH^{\bullet}(\fg, \fg_{\0 }; \C)/\operatorname{Ker}(\res_{\C}) \right) \\
&\cong \operatorname{MaxSpec}\left(\HH^{\bullet}(\fg_{k}, \fg_{k,\0 }; \C) \right) \\
&\cong \mathbb{A}^{k}.
\end{align*}

Now consider $\mathcal{V}_{(\fe, \fe_{\0})}(L(\lambda)).$  Recall that $\tilde{\fe}_{\1} \subseteq \fe_{\1}.$  Since $L(\lambda)$ is stable it follows that $L(\lambda)$ contains a simple $\fg_{k}$-supermodule of atypicality $k$ as a direct summand (namely
$\Res_{\lambda'}L(\lambda)$) it follows by Theorem~\ref{T:constantatypicality} that $\mathcal{V}_{(\tilde{\fe}, \tilde{\fe}_{\0})}(L(\lambda)) = \mathcal{V}_{(\tilde{\fe}, \tilde{\fe}_{\0})}(\C)$.   By the rank variety description of $\widetilde{\fe}$ support varieties  it must be that for any $x \in \tilde{\fe}_{\1},$ $L(\lambda)$ is not projective as an $\langle x \rangle$-supermodule. Here $\langle x \rangle$ denotes the Lie subsuperalgebra generated by $x.$  
This statement is equally true if we view $x$ as an element of $\fe_{\1}.$  Thus, we have $\widetilde{\fe}_{\1} \subseteq \mathcal{V}_{(\fe , \fe_{\0})}(L(\lambda)).$ Therefore by \eqref{E:resmapsbetweenvarieties} one has,  
\begin{equation}\label{E:anotherdamnequation}
\res^{*}(\widetilde{\fe}_{\1}) \subseteq \res^{*}(\mathcal{V}_{(\fe, \fe_{\0})}(L(\lambda))) \subseteq \mathcal{V}_{(\fg, \fg_{\0})}(L(\lambda)) \cong \mathbb{A}^{k}.
\end{equation}
However, by \eqref{E:resmapsbetweenvarieties} the map $\res^{*}$ is finite-to-one so $\res^{*}\left( \widetilde{\fe}_{\1}\right)$ is a $k$-dimensional closed subset of $\mathbb{A}^{k}.$  However $\mathbb{A}^{k}$ is a $k$-dimensional irreducible variety.  Therefore $\res^{*}\left( \widetilde{\fe}_{\1}\right)=\mathbb{A}^{k}$ and all the containments in \eqref{E:anotherdamnequation} must be equalities.  This proves $(a).$
To prove $(b)$ we simply use the fact that the fibers of the map $\res^{*}$ are precisely the orbits of the finite group $\mathcal{W}$. 
\end{proof}

The above theorem immediately implies the validity of the atypicality conjecture for $\gl (m|n)$ and $\osp (m|2n)$.

\begin{corollary}\label{C:atypicalitycorollary}  Let $\fg$ denote $\gl (m|n)$ and $\osp (m|2n)$ and let $L(\lambda)$ be a simple $\fg$-supermodule of atypicality $k$.  Then
\[
\dim \V_{(\fe, \fe_{\0})}(L(\lambda)) = \dim \V_{(\fg, \fg_{\0})}(L(\lambda)) = \atyp (L(\lambda)).
\]
\end{corollary}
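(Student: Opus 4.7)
The plan is to extract both dimensions directly from Theorem~\ref{T:typeA}, which does essentially all of the work. First, for the $\fg$-support variety, the chain of identifications in part (a) gives $\V_{(\fg,\fg_{\0})}(L(\lambda)) \cong \mathbb{A}^k$, so I would simply read off $\dim \V_{(\fg,\fg_{\0})}(L(\lambda)) = k = \atyp(L(\lambda))$.

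For the $\fe$-support variety, I would invoke part (b): $\V_{(\fe,\fe_{\0})}(L(\lambda)) = \mathcal{W}\cdot \widetilde{\fe}_{\1}$. Since $\mathcal{W}$ is a finite pseudoreflection group, this is a finite union of translates of the linear subspace $\widetilde{\fe}_{\1}$, so it has the same dimension as $\widetilde{\fe}_{\1}$. To conclude I need to know that $\dim \widetilde{\fe}_{\1}=k$, which follows because $\widetilde{\fe}$ is the detecting subalgebra of $\fg_k$ and $\fg_k$ has defect $k$ by construction in Section~\ref{SS:blockEq}; the explicit description of the detecting subalgebra in \cite[Section~4]{BKN1} (or the identification $\res^*(\widetilde{\fe}_{\1}) \cong \mathbb{A}^k$ already recorded in \eqref{E:gvariety}, together with the fact that $\res^*$ is finite-to-one) gives $\dim \widetilde{\fe}_{\1}=k$.

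Combining the two computations yields both equalities. No genuine obstacle arises here, since the corollary is stated as an immediate consequence of Theorem~\ref{T:typeA}; essentially all of the work has been absorbed into that theorem and into the generalized Kac--Wakimoto result (via Theorem~\ref{T:constantatypicality}) that allowed us to reduce to a conveniently chosen $L(\lambda)$ with stable composition factors. The only thing to be slightly careful about is the case $k=0$, where $L(\lambda)$ is typical hence projective (cited at the start of Section~\ref{SS:blockEq}), so both support varieties are trivial and both equal $0=\atyp(L(\lambda))$; this handles the one case not literally covered by Theorem~\ref{T:typeA}, which was stated under the standing assumption $\atyp(L(\lambda))>0$.
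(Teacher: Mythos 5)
Your proposal is correct and matches the paper, which derives the corollary immediately from Theorem~\ref{T:typeA}: part (a) gives $\dim \V_{(\fg,\fg_{\0})}(L(\lambda))=k$ and part (b) exhibits $\V_{(\fe,\fe_{\0})}(L(\lambda))$ as a finite union of $k$-dimensional subspaces. Your separate treatment of $k=0$ is fine but already absorbed into Theorem~\ref{T:typeA} via the remark at the start of Section~\ref{SS:blockEq} that typical simples are projective.
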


\linespread{1}

\end{document}